\documentclass{amsart}

\usepackage{amsmath}
\usepackage{amssymb}
\usepackage{paralist}
\usepackage[english]{babel}
\usepackage[colorlinks=true]{hyperref}
\usepackage{enumitem}
\usepackage{array}
\usepackage[latin1]{inputenc}
\usepackage{xcolor}
\usepackage{esint}
\usepackage{latexsym,amsfonts}
\usepackage{amsthm}
\usepackage{cleveref}
\usepackage{mathtools}
\usepackage{verbatim}
\usepackage{graphics}

\numberwithin{equation}{section}

\newtheorem{theorem}{Theorem}[section]
\newtheorem{lemma}[theorem]{Lemma}

\newtheorem*{main-thm}{Main Theorem}
\newtheorem*{main-lemma}{Main Lemma}

\theoremstyle{definition}

\newcommand{\abs}[1]{\left\vert#1\right\vert} 
\newcommand{\pare}[1]{\left(#1\right)} 
\newcommand{\braces}[1]{\left\{#1\right\}} 
\newcommand{\brackets}[1]{\left[#1\right]} 
\newcommand{\set}[2]{\left\{#1 \; :\; #2\right\}} 
\newcommand{\prodin}[2]{\langle #1,#2 \rangle} 

\DeclareMathOperator*{\trace}{Tr}
\DeclareMathOperator*{\midr}{midrange}

\newcommand{\R}{\mathbb R}
\newcommand{\Rn}{\mathbb{R}^n}
\newcommand{\N}{\mathbb N}
\newcommand{\B}{\mathbb B}
\newcommand{\A}{\mathcal A} 
\newcommand{\e}{\mathbf{e}}
\renewcommand{\P}{\mathbf{P}}

\newcommand{\Q}{\mathbf{Q}}
\newcommand{\RR}{\mathbf{R}}
\newcommand{\I}{\mathbf{I}}
\renewcommand{\L}{\mathbf{L}}
\newcommand{\W}{\mathbf{v}\otimes\mathbf{v}}
\renewcommand{\v}{\mathbf{v}}
\newcommand{\Ln}{\mathcal{L}^{n-1}}
\newcommand{\eps}{{\varepsilon}}
\newcommand{\half}{{\frac{1}{2}}}

\def\Xint#1{\mathchoice
   {\XXint\displaystyle\textstyle{#1}}%
   {\XXint\textstyle\scriptstyle{#1}}%
   {\XXint\scriptstyle\scriptscriptstyle{#1}}%
   {\XXint\scriptscriptstyle\scriptscriptstyle{#1}}%
   \!\int}
\def\XXint#1#2#3{{\setbox0=\hbox{$#1{#2#3}{\int}$}
     \vcenter{\hbox{$#2#3$}}\kern-.5\wd0}}

\def\dashint{\Xint-}


\begin{document}


\title[Lipschitz regularity for stochastic games]{Asymptotic Lipschitz regularity for tug-of-war games with varying probabilities}

\author[Arroyo]{\'Angel Arroyo}
\address{Department of Mathematics and Statistics, University of Jyv\"askyl\"a, PO~Box~35, FI-40014 Jyv\"askyl\"a, Finland}
\email{angel.a.arroyo@jyu.fi}

\author[Luiro]{Hannes Luiro}
\email{hannes.s.luiro@jyu.fi}

\author[Parviainen]{Mikko Parviainen}
\email{mikko.j.parviainen@jyu.fi}

\author[Ruosteenoja]{Eero Ruosteenoja}
\email{eero.ruosteenoja@jyu.fi}

\date{\today}
\keywords{Dynamic programming principle, local Lipschitz estimates, stochastic games, normalized $p(x)$-Laplacian.} 
\subjclass[2010]{91A05, 91A15, 91A50, 35B65, 35J60, 35J92}

\begin{abstract}
We prove an asymptotic Lipschitz estimate for value functions of tug-of-war games with varying probabilities defined in $\Omega\subset \R^n$. The method of the proof is based on a game-theoretic idea to estimate the value of a related game defined in $\Omega\times \Omega$ via couplings.
\end{abstract}

\maketitle


\section{Introduction}

\subsection{Motivation and statement of the main result} Tug-of-war games have gained attention after the seminal papers of Peres, Schramm, Sheffield, and Wilson \cite{peresssw09,peress08}. They showed that these two-player zero-sum games have connections to homogeneous and inhomogeneous normalized PDEs in non-divergence form via dynamic programming principle (DPP for short). Regularity properties of value functions of tug-of-war games have been studied in \cite{manfredipr12,luirops13} by using translation invariance and good symmetry properties, which are no longer available in the natural generalization to the case, where probabilities depend on the location. In this space-dependent case Luiro and Parviainen \cite{luirop} showed asymptotic local H\"older regularity for value functions by developing a game-theoretic method in the spirit of couplings. Our aim is to improve this result by showing an asymptotic Lipschitz estimate.

The object of our study is the value function $u_\varepsilon:\Omega\rightarrow\R$ of the variant of tug-of-war game that is explained in Section \ref{sec:intuition} below. The function $u_\eps$ satisfies the DPP 
\begin{align}\label{DPP}
u_\eps(x)&=\frac12\sup_{\abs{\nu}=\eps}\left(\alpha(x)u_\eps(x+\nu)+\beta(x)\dashint_{B_\varepsilon^\nu(x)} \hspace{-10pt} u_\eps\ d\mathcal{L}^{n-1}\right)\nonumber\\
&\quad +\frac12\inf_{\abs{\nu}=\eps}\left(\alpha(x)u_\eps(x+\nu)+\beta(x)\dashint_{B_\varepsilon^\nu(x)} \hspace{-10pt} u_\eps\ d\mathcal{L}^{n-1}\right)
\end{align}
for $x\in\Omega$, where $\Omega\subset\R^n$ is a bounded domain, $\eps>0$, $B_\varepsilon^{\nu}(x)$ denotes the $(n-1)$-dimensional ball of radius $\varepsilon>0$ centered at $x\in\R^n$ and orthogonal to $\nu\neq 0$, and $\mathcal{L}^{n-1}$ stands for the $(n-1)$-dimensional Lebesgue measure. The coefficients $\alpha:\Omega\to(0,1]$ and $\beta:\Omega\to[0,1)$ are continuous probability functions such that $\alpha(x)+\beta(x)=1$ and
\begin{equation*}
				0<\alpha_\textrm{min}\leq\alpha(x)\leq 1
\end{equation*}
for all $x\in\Omega$.

Next suppose that, in particular, the functions $\alpha$ and $\beta$ take the form
\begin{equation*}
				\alpha(x)=\frac{p(x)-1}{n+p(x)} \quad \quad \mbox{ and } \quad \quad \beta(x)=\frac{n+1}{n+p(x)},
\end{equation*} 
where the function $p:\Omega\rightarrow(1,\infty]$ is continuous and bounded away from $1$. 

Under these assumptions, Arroyo, Heino and Parviainen \cite{ARR-HEI-PAR} showed that for a given continuous boundary data and a suitable boundary cut-off function, it holds that $u_\eps\rightarrow u$ uniformly when $\eps\rightarrow 0$, where $u$ is the viscosity solution of the normalized $p(x)$-Laplace equation $-\Delta^N_{p(x)}\, u(x)=0$. Here 
\begin{equation*}
\Delta^N_{p(x)}\, u(x):\,=\Delta u(x)+(p(x)-2)\Delta^N_{\infty}\, u(x),
\end{equation*}
where $\Delta^N_\infty$ stands for the normalized infinity Laplacian defined by
\begin{equation*}
				\Delta^N_\infty u:\,=\langle D^2 u\frac{D u}{|Du|},\frac{D u}{|Du|} \rangle. 
\end{equation*}
Moreover, by \cite[Theorem 4.1]{ARR-HEI-PAR}, the function $u_\eps$ is asymptotically H\"older continuous.

In this paper we introduce a new game-theoretic strategy to show asymptotic local Lipschitz regularity for $u_\eps$ under the assumption that the function $p(\cdot)$ is H\"older continuous. The main theorem is stated as follows.

\begin{theorem}\label{MAIN-THM}
Assume that the function $\alpha:\Omega\rightarrow(0,1]$ is H\"older continuous with a H\"older exponent $s\in (0,1)$ and a H\"older constant $C_\alpha>0$. Let $B_{2r}(x_0)\subset\Omega$ for some $r>0$. Then, for a solution $u_\varepsilon$ of \eqref{DPP} it holds
\begin{equation*}
				\abs{u_\eps(x)-u_\eps(z)}\leq C\pare{\abs{x-z}+\varepsilon} \quad\quad \mbox{ when }\ x,z\in B_r(x_0),
\end{equation*}
for some constant $C>0$ depending on $\alpha_\textrm{min}$, $C_\alpha$, $s$, $n$, $r$ and $\sup_{B_{2r}}\abs{u}$.
\end{theorem}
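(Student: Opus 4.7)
The plan is to prove the estimate by a coupled-DPP argument on $\Omega\times\Omega$. Writing $\mathcal{E}_y(\nu):=\alpha(y)u_\eps(y+\nu)+\beta(y)\dashint_{B_\eps^\nu(y)}u_\eps\,d\Ln$ and using the elementary bounds $\sup A - \sup B \leq \sup_\nu(A(\nu)-B(T^+(\nu)))$ and $\inf A - \inf B \leq \sup_\nu(A(T^-(\nu))-B(\nu))$ valid for any coupling maps $T^\pm$ between the admissible vectors, \eqref{DPP} applied at $x$ and $z$ yields
\[u_\eps(x)-u_\eps(z)\leq\tfrac12\sup_{|\nu|=\eps}\bigl[\mathcal{E}_x(\nu)-\mathcal{E}_z(T^+(\nu))\bigr]+\tfrac12\sup_{|\nu|=\eps}\bigl[\mathcal{E}_x(T^-(\nu))-\mathcal{E}_z(\nu)\bigr].\]
I would pick $T^\pm$ so that $(T^\pm\nu)^\perp=\nu^\perp$, allowing the $(n-1)$-dimensional midrange balls at $x$ and $z$ to be coupled by translation (preserving $|x-z|$ in the random step). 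A $\nu$-dependent choice --- e.g.\ negation when $\nu\cdot(z-x)>0$ and identity otherwise --- can be arranged so that the deterministic step never increases $|x-z|$. Decomposing the one-step coupled difference as
\[\mathcal{E}_x(\nu)-\mathcal{E}_z(T\nu)=\alpha(x)\bigl[u_\eps(x+\nu)-u_\eps(z+T\nu)\bigr]+\beta(x)\bigl[\dashint_{B_\eps^\nu(x)}u_\eps-\dashint_{B_\eps^{T\nu}(z)}u_\eps\bigr]+[\alpha(x)-\alpha(z)]R,\]
the assumed H\"older continuity of $\alpha$ controls the last \emph{H\"older-defect} term by $C_\alpha|x-z|^s\cdot 2\sup_{B_{2r}}|u_\eps|$.

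Next I would look for a comparison function of the form $F(x,z)=C_1\phi(|x-z|)+C_2\eps+C_3\bigl(|x-x_0|^2+|z-x_0|^2\bigr)$, with $\phi:[0,4r]\to\R$ concave, increasing, $\phi(0)=0$ and $\phi(t)\asymp t$ (say $\phi(t)=t-\gamma t^2/r$). The quadratic barrier forces $F(x,z)\geq 2\sup_{B_{2r}}|u_\eps|$ on the relevant part of $\partial(B_{2r}(x_0)\times B_{2r}(x_0))$, so it suffices to verify that $F$ is a supersolution of the coupled one-step inequality in the interior. Plugging $F$ into that formula, the translation-coupled random step leaves $\phi(|x-z|)$ unchanged, the deterministic step with the $\nu$-dependent coupling contributes a strict concavity gain of order $\eps^2$ via Taylor expansion of $\phi$, and the barrier supplies additional $\eps^2$ contributions through the Hessian of $|y-x_0|^2$. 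The constants $C_1,C_2,C_3$ have to be chosen so that the total concavity gain beats the H\"older drift $C_\alpha|x-z|^s\sup_{B_{2r}}|u_\eps|$ from the defect term.

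Once the supersolution property is established, the standard comparison argument --- $u_\eps(x)-u_\eps(z)-F(x,z)$ cannot have a positive interior maximum by the coupled DPP and is non-positive on the relevant boundary by the barrier --- yields $u_\eps(x)-u_\eps(z)\leq F(x,z)\leq C(|x-z|+\eps)$ on $B_r(x_0)\times B_r(x_0)$, and interchanging $x$ and $z$ finishes the proof. The main obstacle I anticipate is calibrating the coupling $T$ and the function $\phi$ so that the $\eps^2$-scale concavity gain dominates the $|x-z|^s$-scale H\"older drift uniformly down to the scale $|x-z|\sim\eps$: below that scale the Taylor expansion of $\phi$ loses effectiveness and the additive term $C_2\eps$ in $F$ has to take over directly, which is exactly what forces the additive $\eps$ in the final estimate.
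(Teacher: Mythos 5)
Your overall plan---a coupled DPP on $\Omega\times\Omega$, a comparison function built from a concave modulus plus a quadratic barrier, a supersolution check via Taylor expansion, and a comparison argument---is the same general framework as the paper's. But there are two genuine gaps in the way you set it up, and the first one already breaks the argument.

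\emph{Wrong decomposition of the DPP.} You split $u_\eps(x)-u_\eps(z)$ as $\tfrac12[\sup_x-\sup_z]+\tfrac12[\inf_x-\inf_z]$ and then bound each bracket by a coupled $\sup$. This produces \emph{two suprema}, with no midrange structure, and the supersolution verification cannot work: after plugging your comparison function $F$ into each bracket and Taylor expanding, the opponent is free to choose the worst step in \emph{both} sups, so the first-order gradient terms in $F$ (e.g.\ $\omega'(\abs{x-z})\,\varepsilon$-size contributions from the $\alpha$-defect, and $\varepsilon$-size contributions from the barrier $|x-x_0|^2+|z-x_0|^2$) accumulate with a positive sign and there is nothing to cancel them. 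For \emph{any} nonconstant $F$, $\sup_\nu[\ldots]$ exceeds $F(x,z)$ by a term of order $\varepsilon\,|\nabla F|$, so the supersolution inequality fails at first order already, independently of the Hölder drift or the clever coupling. The paper instead groups the DPP as $\sup_x-\inf_z$ and $\inf_x-\sup_z$, which is algebraically the same quantity but gives an \emph{exact} midrange $\tfrac12\sup_{\nu_x,\nu_z}+\tfrac12\inf_{\nu_x,\nu_z}$ of the coupled one-step difference (see \eqref{u(x)-u(z)} and \eqref{midrangeineq-f}). With the midrange one can, for the opponent's near-optimal pair $(\nu_x,\nu_z)$, pick a \emph{separate} response $(\widetilde\nu_x,\widetilde\nu_z)$, and the first-order terms cancel between the two evaluations (this cancellation is used explicitly in \Cref{Case1}). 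Your two-sup bound is valid but strictly weaker, and it discards exactly the structure that makes the proof possible.

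\emph{Wrong modulus.} Even granting the right decomposition, your choice $\phi(t)=t-\gamma t^2/r$ has $\phi''$ constant (of order $1/r$). The coupled expansion forces you to compare $\phi''(\abs{x-z})$ against the coefficient $\dfrac{\alpha(x)-\alpha(z)}{\abs{x-z}}\le C_\alpha\abs{x-z}^{s-1}$ coming from the $\alpha$-defect term (see \eqref{introcite2}). Since $s<1$ this blows up as $\abs{x-z}\to 0$, so a bounded $\phi''$ cannot dominate it uniformly down to the scale $\abs{x-z}\sim\varepsilon$. The paper is forced to take $\omega(t)=t-\omega_0 t^{1+s}$, so that $\omega''(t)\sim -t^{s-1}$ matches the singularity precisely, and then chooses $\omega_0$ large (\eqref{omega0}) to beat the constant $C_\alpha$. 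This is also why the threshold in the case split must be distance-dependent, $\Theta=\abs{x-z}^s$, rather than the fixed sign-of-$\nu\cdot(z-x)$ rule you propose; a fixed rule leaves a perpendicular-drift gap of the wrong order.

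Two smaller remarks: the additive $C_2\varepsilon$ in your $F$ is not enough to take over below scale $\abs{x-z}\lesssim\varepsilon$; the paper needs a staircase function $f_2=C^{2(N-i)}\varepsilon$ on annuli $A_i$ so that one $\varepsilon$-step inwards gains a factor $C^2$ (\Cref{remarks}). And your $\nu$-dependent coupling changes which term of the midrange ball at $z$ is matched to each direction; the paper carefully picks orthogonal matrices $\P_{\nu_x},\P_{\nu_z}$ with the property of \Cref{matrices} so that the noise terms contribute the right $\abs{\nu_x+\nu_z}^2$ factor rather than something larger.
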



\subsection{Heuristic idea of the game and the method of the proof}\label{sec:intuition}

Although the proofs in this paper are mainly written without the game terminology, the intuition behind the proofs comes from the stochastic games, and this point of view helps in understanding the proofs below. The function $u_\eps$ satisfying the DPP \eqref{DPP} in $\Omega$ with some continuous boundary data is the value function of the following game. There are two players, Player I trying to maximize the payoff and Player II trying to minimize it. First the token is placed at $x_0\in \Omega$. Both players choose a vector of length $\eps$. Let $\nu^+$ be the choice of Player I and $\nu^-$ the choice of Player II. Then they flip a fair coin. If Player I wins the toss, with probability $\alpha(x_0)$ the token moves to $x_0+\nu^+$, and with probability $\beta(x_0)$, the token moves somewhere in the $(n-1)$-dimensional ball $B_\varepsilon^{\nu^+}(x_0)$ according to the uniform probability density. Similarly, if Player II wins the fair toss, with probability $\alpha(x_0)$ the token moves to $x_0+\nu^-$, and with probability $\beta(x_0)$ it moves somewhere in $B_\varepsilon^{\nu^-}(x_0)$, again according to the uniform probability density. The game continues until the token hits $\R^n\setminus \Omega$ for the first time at, let us say $x_\tau$, and then Player II pays Player I the amount given by the payoff function at $x_\tau$. Intuitively, by summing up the probabilities at $x_0$ we get the DPP \eqref{DPP} at the point $x_0$.
For a more detailed presentation of the game and its connection to the DPP \eqref{DPP}, we refer to \cite{ARR-HEI-PAR}.
 
To explain the starting point of the proof with a simple notation, we consider for a moment a more simple DPP related to the limit case $\alpha(\cdot)\equiv 1$ and $\beta(\cdot)\equiv 0$,
\begin{equation}\label{DPP-infinity}
				u_\varepsilon(x)=\frac{1}{2}\sup_{B_\varepsilon(x)}u_\varepsilon+\frac{1}{2}\inf_{B_\varepsilon(x)}u_\varepsilon,
\end{equation}
which was studied in \cite{peresssw09}, and has a connection to infinity harmonic functions. To start with, observe that $$u_{\eps}(x)-u_{\eps}(z)=\,:G(x,z)$$ can be written as a solution of a certain natural DPP in $\R^{2n}$: For all $(x,z)\in \Omega\times \Omega$ it holds that 
\begin{align}\label{eq:DPP-2}
				G(x,z)=u_\eps(x)-u_\eps(z)&=\frac{1}{2}\,\big(\,\sup_{B_\eps(x)}u_\eps+\inf_{B_\eps(x)}u_{\eps}-\sup_{B_\eps(z)}u_\eps-\inf_{B_\eps(z)}u_{\eps}\,\big)\nonumber \\
				&=\frac{1}{2}\sup_{B_{\eps}(x)\times B_{\eps}(z)}G\,+\,\frac{1}{2}\inf_{B_{\eps}(x)\times B_{\eps}(z)}G\,.
\end{align}
This resembles the  original DPP for $u_{\eps}$ in $\R^{n}$ but is for $G$ in $\R^{2n}$. In this way the question about the Lipschitz regularity of $u_{\eps}$
is converted into a question about the \text{absolute size} of a solution of \eqref{eq:DPP-2} in $\Omega\times\Omega\subset \R^{2n}$.

Next we explain the idea of estimating $|G(x,z)|$ via a stochastic game in $\R^{2n}$. We utilize the observation that $G=0$ in the diagonal set $$T:\,=\{(x,z): x=z\}.$$
The rules of the game are as follows: two game tokens are placed in $\Omega$. Two players, we and the opponent, play the game so that at each turn, if the game tokens are at $x_k$ and $z_k$ respectively, they have an equal chance to win the turn. If a player wins the turn, he can move the game token at $x_k$ to any point in $B_{\eps}(x_k)$ and the game token at $z_k$ anywhere in $B_{\eps}(z_k)$. The game stops if 1) game tokens have the same position or 2) one of the game tokens is placed outside $\Omega$. The pay-off is zero if the game ends due to the first condition  and $2\sup|u_{\eps}|$  if the game ends due to the second condition. We try to minimize the pay-off and the opponent tries to maximize the payoff. 
In other words, we try to pull the game tokens to the same position before the opponent succeeds in moving one of the game tokens outside $\Omega$. 

Heuristically speaking, the expected value of this game should evidently be
larger or equal than $|G|$ since we are using boundary values that are obviously larger than $\abs G$ at the boundary, taking the comparison principle and even existence of the value of this game for granted at this point.

Thus it suffices to estimate the value of this game. For this we need a suitable strategy in the game.  Let us consider the following natural candidate as an example: what happens if we always simply move, in the case we win the coin toss,
the game tokens straight towards each others. Indeed, if the game tokens are at $x$ and $z$, our moves are 
\begin{equation*}
				h_x:\,= -\eps\frac{x-z}{|x-z|}\quad\text{ and }\quad h_z:\,=\eps\frac{x-z}{|x-z|}\,.
\end{equation*}
It turns out that this strategy does not work well enough. The reason is that if the opponent plays against our moves but with a \text{slight turn}, by choosing 
\begin{equation*}
				\widehat{h}_x:\,= \eps T_{\theta}\bigg(\frac{x-z}{|x-z|}\bigg)\quad \text{ and }\quad \widehat{h}_z:\,= \eps T_{\theta}\bigg(-\frac{x-z}{|x-z|}\bigg),
\end{equation*} 
where $T_{\theta}$ is a rotation matrix of a very small angle $\theta$ (for $\theta\approx\eps^{3/4}$),
the distance to the boundary is expected to decrease much faster than the distance between the game tokens. Indeed, think of one step of length $\eps$ and twist $\theta$. Then in the direction $x-z$, the opponent's expected one step loss is approximately $\half \eps\theta^2=\half \eps^{5/2}$ whereas in the perpendicular direction his expected gain is $\eps \theta=\eps^{7/4}$, which is much larger for small $\eps$. 

To prevent the opponent taking advantage of the slight turn phenomenon, a more promising idea is to follow a threshold angle strategy: we could set a lower threshold and then define our strategy according to this threshold. If the step of the opponent almost taking her to $\sup_{B_{\eps}(x)\times B_{\eps}(z)}G$ makes an angle greater than the threshold with the direction $x-z$, then our strategy could be to pull the tokens straight towards each other. On the other hand, if the angle is less than the threshold, then we could pull against the step of the opponent. It turns out to be hard to evaluate the game value directly, but instead, one should try to find an explicit super-value $f$ of the game, i.e.,
\begin{equation*}
				f(x,z)>\frac{1}{2}\sup_{B_{\eps}(x)\times B_{\eps}(z)}f\,+\,\frac{1}{2}\inf_{B_{\eps}(x)\times B_{\eps}(z)}f\,, 
\end{equation*}
where $2\sup_{\partial\Omega}|u_{\eps}| \leq f$ on the boundary of $\Omega\times\Omega$, and $|f(x,z)|\lesssim|x-z|^\delta$ for some $\delta>0$. In 
\cite{luirop}, these ideas combined to a comparison argument guaranteed that the game value is less than or equal to $f$ inside $\Omega\times\Omega$ and yielded an asymptotic H\"older estimate for the function $u_\eps$ satisfying \eqref{DPP-infinity}.

To obtain an asymptotic Lipschitz estimate, on the other hand, we further need a super-value with a stronger requirement $|f(x,z)|\lesssim|x-z|$ in $\Omega\times \Omega$. This idea is applied for our proof of \Cref{MAIN-THM}. The change of the comparison function gives us substantially less advantage in choosing our strategy compared to the H\"older requirement. Hence, our threshold angle strategy cannot be fixed but it needs to depend both on the distance of the points and the H\"older exponent of the probability function $\alpha(\cdot)$. For details of our strategy, see Section \ref{Section2}.

Usually when starting from a game in $\Rn$ one could derive several different games in $\R^{2n}$, and we need to choose the  game that is suitable for our purposes. In stochastics or optimal mass transport language, we choose the couplings of the probability measures on $\Rn$ in such a way that we get a probability measure on $\R^{2n}$ having the original measures as marginals.

It has turned out that the above approach has connections to the method of couplings dating back to the 1986 paper of Lindvall and Rogers \cite{lindvallr86}, see also for example \cite{priolaw06,porrettap13,kusuoka15} for recent applications to PDEs. In the theory of viscosity solutions, this is related to the doubling of variables procedure, and in particular Ishii-Lions regularity method introduced in \cite{ishiil90}. A key point in the Ishii-Lions method is to utilize the celebrated theorem of sums at the maximum point of $u(x)-u(z)-f(x,z)$. Our proof does not rely on the theorem of sums. In addition, even if as a corollary our result also implies a similar result for the PDE,  our main objective is to prove regularity for stochastic games with nonzero step size. For example the small turn phenomenon is not present in the PDE setting. 


\subsection{Outline of the paper} In Section \ref{background} we fix the notation, introduce our super-value $f$  and state the key \Cref{PROPOSITION} for this comparison function. In Section \ref{Section2} we prove Theorem \ref{MAIN-THM} in the case $|x-z|>>\eps$, and in Section \ref{remarks} in the case $|x-z|\lesssim \eps$. Finally, in Section \ref{2<p<infty} we consider a less technical alternative game in order to prove Theorem \ref{MAIN-THM} in the restricted case $2<p(x)\leq\infty$.

\section{Preliminaries}\label{background}

\subsection{Notation}
Given $\nu\neq 0$, let
\begin{equation*}
				B_\varepsilon^\nu(x):\,=B_\varepsilon(x)\cap\braces{\nu}^\bot=\set{\xi\in\R^n}{\abs{\xi-x}<\varepsilon\quad\mbox{and}\quad\prodin{\nu}{\xi-x}=0}.
\end{equation*}
For $i=1,2,\ldots,n$, we denote by $\e_i\in\R^n$ the column vector containing $1$ in the $i$-th component and $0$ in the rest. For simplicity, we denote
\begin{equation*}
				\B^{\e_1}:\,=B_1^{\e_1}(0)=\set{\xi\in\R^n}{\abs{\xi}<1\quad\mbox{and}\quad\xi_1=0}.
\end{equation*}
Let us denote by $O(n)$ the $n$-dimensional orthogonal group
\begin{equation*}
				O(n):\,=\set{\P\in\R^{n\times n}}{\P^\top\P=\P\P^\top=\I},
\end{equation*}
where $\P^\top$ stands for the transpose of $\P$. 
Given $\nu\in\R^n$ such that $\abs{\nu}=1$, we denote by $\P_\nu\in O(n)$ an $n$-dimensional orthogonal matrix sending the vector $\e_1$ to $\nu$, that is,
\begin{equation}\label{Pnu}
				\P_\nu\e_1=\nu.
\end{equation}
Note that this is a matrix whose first column vector coincides with $\nu$ and it is not unique. Thus, due to the symmetries of the ball $\B^{\e_1}$, we can write
\begin{equation}\label{orthogo-ball}
				B_\varepsilon^\nu(x)=x+\varepsilon\,\P_\nu\B^{\e_1},
\end{equation}
with no dependence on the particular choice of the matrix $\P_\nu$ as long as \eqref{Pnu} holds. \\

\noindent \textbf{Remark.} For the rest of the paper, we fix $\eps>0$ and denote $u:\,=u_\eps$ to simplify the notation. \\

We will use the notation
\begin{equation*}
				\midr_{i\in I}a_i=\frac{1}{2}\sup_{i\in I}a_i + \frac{1}{2}\inf_{i\in I}a_i
\end{equation*}
for brevity. For the same reason we introduce the auxiliary function
\begin{equation}\label{Au}
				\A_\varepsilon u(x,\nu):\,=\alpha(x)u(x+\varepsilon\nu)+\beta(x)\dashint_{B_\varepsilon^\nu(x)} \hspace{-10pt} u(\xi)\ d\mathcal{L}^{n-1}(\xi),
\end{equation}
where $\abs{\nu}=1$. Hence, \eqref{DPP} reads as
\begin{equation}\label{DPP2}
				u(x)=\midr_{\abs{\nu}=1}\A_\varepsilon u(x,\nu),
\end{equation}
for all $x\in\Omega$. Fix any orthogonal matrix $\P_\nu$ satisfying \eqref{Pnu}, then, performing the change of variables $\zeta=\P_\nu^\top\xi$ in the integral part of \eqref{Au} and recalling \eqref{orthogo-ball}, we get
\begin{equation}\label{Au2}
				\A_\varepsilon u(x,\nu)=\alpha(x)u(x+\varepsilon\nu)+\beta(x)\dashint_{\B^{\e_1}} u(x+\varepsilon\,\P_\nu\zeta)\ d\mathcal{L}^{n-1}(\zeta).
\end{equation}

Again, we remark that the choice $\P_\nu\in O(n)$ does not play any role in \eqref{Au2}. However, the particular choice of the matrix $\P_\nu$ will become important later for obtaining estimates.


\subsection{Comparison function}\label{Comp}

For the construction of a suitable comparison function in $\R^{2n}$, first we define 
an increasing function  $\omega:[0,\infty)\to[0,\infty)$ having the desired regularity properties. 
To be more precise, let 
\begin{equation}\label{omega}
				\omega(t)=t-\omega_0\, t^\gamma \quad\quad \text{for }\quad 0\leq t\leq\omega_1:\,=\left(\frac{1}{2\gamma\omega_0}\right)^{1/(\gamma-1)}.
\end{equation}
For $t>\omega_1$, the precise formula is not relevant. 
Here $\gamma=1+s$, where $0<s<1$ is the Hölder exponent of the function $\alpha(\cdot)$, and
\begin{equation*}
				\omega_0>\frac{1}{2r^{\gamma-1}} \quad\quad \mbox{ (and thus $\omega_1<r$)} 
\end{equation*} 
is a constant depending on the function $\alpha(\cdot)$ to be fixed later (see \eqref{omega0-omega1} and \eqref{omega0}). Note that, defined in this way, $\omega$ is an increasing and strictly concave $C^2$-function in $(0,\omega_1]$. Moreover,
\begin{equation*}
				\omega'(t)\in \left[\frac12,1\right]\quad \quad \text{when}\quad 0\leq t\leq\omega_1,
\end{equation*}
and
\begin{equation}\label{omega''}
				\omega''(t)=-\gamma(\gamma-1)\omega_0\, t^{\gamma-2}<0 \quad\quad \text{for }\quad 0< t\leq\omega_1.
\end{equation}

Next, we define the function $f_1:\R^{2n}\to\R$ by
\begin{equation*}
				f_1(x,z)=C\omega(\abs{x-z})+M\abs{x+z}^2,
\end{equation*}
where $C>0$ is a constant depending on the function $C_\alpha$, $\alpha_\mathrm{min}$, $s$, $r$ and $\sup_{B_{2r}}\abs{u}$ that will be fixed later (see \eqref{C0}, \eqref{C-case1}, \eqref{C-case2}, \eqref{C-Section4-1} and \eqref{C-Section4-2}). As we have remarked, the key term in the comparison function $f_1$ is $C\omega(\abs{x-z})$, while the role of the term $M\abs{x+z}^2$ is just to guarantee that
\begin{equation}\label{ineq-f1}
				\abs{u(x)-u(z)}\leq f_1(x,z) \quad\quad \mbox{ when }\ x,z\in B_{2r}\setminus B_r,
\end{equation}
for certain $M>0$. Indeed, if $x,z\in B_{2r}\setminus B_r$ such that $\abs{x-z}\leq r$, then
\begin{equation*}
				\abs{x+z}^2=2\abs{x}^2+2\abs{z}^2-\abs{x-z}^2\geq 3r^2.
\end{equation*}
Therefore, choosing
\begin{equation}\label{M}
				M=\frac{2}{3r^2}\,\sup_{B_{2r}}\abs{u},
\end{equation}
we obtain
\begin{equation*}
				\abs{u(x)-u(z)}\leq 2\sup_{B_{2r}}\abs{u}=3Mr^2\leq M\abs{x+z}^2\leq f_1(x,z).
\end{equation*}
On the other hand, if $\abs{x-z}>r$, since $r>\omega_1$, we can extend $\omega$ outside $[0,\omega_1]$ in such a way that $\omega$ is increasing and $C\omega(r)>2\sup\abs{u}$. Then $\omega(\abs{x-z})\geq\omega(r)$ and \eqref{ineq-f1} follows.

Note that the concavity of $\omega$ turns out to be crucial when estimating the second order terms in the Taylor's expansion of $f_1$ in \Cref{Case1,Case2}. Moreover, the importance of the explicit formula for $\omega''$ \eqref{omega''} and the choice $\gamma=1+s$ is made clear in the estimate \eqref{introcite2}. To get an idea, recall that the function $\alpha(\cdot)$ is Hölder continuous with exponent $s$, that is,
\begin{equation}\label{alpha-Holder}
				\abs{\alpha(x)-\alpha(z)}\leq C_\alpha\abs{x-z}^s,
\end{equation}
for every $x,y\in\Omega$ and some $C_\alpha>0$. The coupling method leads us to estimate terms with coefficients of the type $$\frac{|\alpha(x)-\alpha(z)|}{|x-z|}$$ together with terms including $\omega''(|x-z|)$.

However, due to the discrete nature of the DPP, functions satisfying \eqref{DPP} can present jumps in the $\varepsilon$-scale. For that reason, in order to control the small scale jumps, we need to define an annular step function $f_2$ as
\begin{equation}\label{f2}
				f_2(x,z)=\begin{cases}
				C^{2(N-i)}\varepsilon & \mbox{ if }\ (x,z)\in A_i,\\
				0 & \mbox{ if }\ \abs{x-z}>\frac{N}{10}\varepsilon,
				\end{cases}
\end{equation}
where
\begin{equation*}
				A_i:\,=\set{(x,z)\in\R^{2n}}{\frac{i-1}{10}\varepsilon<\abs{x-z}\leq\frac{i}{10}\varepsilon} \quad \quad \mbox{ for }\ i=0,1,\ldots,N.
\end{equation*}
Here $N$ is a large constant depending on $C$, $\omega_0$, $C_\alpha$ and $\alpha_\mathrm{min}$ and will be chosen later (see \eqref{N1} and \eqref{N2}). Note that $f_2$ vanishes when $\abs{x-z}>\frac{N}{10}\varepsilon$ and $\sup f_2=C^{2N}\varepsilon$ is reached on the set
\begin{equation*}
				T:\,=A_0=\set{(x,z)\in\R^{2n}}{x=z}.
\end{equation*}

Therefore, our comparison function $f:\R^{2n}\rightarrow\R$ is defined as
\begin{equation*}
				f(x,z)=f_1(x,z)-f_2(x,z).
\end{equation*}
Thus, due to \eqref{f2}, the definition of $f_2$, we will use separate arguments along the proof of \Cref{MAIN-THM}, distinguishing between $f_2=0$ (\Cref{Section2}) and $f_2\neq 0$ (\Cref{remarks}).


\subsection{Statement of the key lemma for the comparison function}
Since our comparison function is $f=f_1-f_2$, where the terms in $f_1$ have been chosen such that \eqref{ineq-f1} holds and $\sup f_2=C^{2N}\varepsilon$, then
\begin{equation*}
				\abs{u(x)-u(z)}\leq f(x,z)+C^{2N}\varepsilon \quad\quad \mbox{ when }\ x,z\in B_{2r}\setminus B_r.
\end{equation*}
Then, our aim is to show that this inequality also holds in $B_r$ for properly chosen constants $C$ and $N$, that is,
\begin{equation}\label{aim}
				\abs{u(x)-u(z)}\leq f(x,z)+C^{2N}\varepsilon \quad\quad \mbox{ when }\ x,z\in B_r.
\end{equation}
This will guarantee the local Lipschitz estimate of Theorem \ref{MAIN-THM}. We will argue by contradiction. If inequality \eqref{aim} does not hold, then we can define a constant
\begin{equation}\label{contra}
				K:\,=\sup_{x',z'\in B_r}(u(x')-u(z')-f(x',z'))>C^{2N}\varepsilon. 
\end{equation}
In what follows, we may assume that $\alpha(x)\geq\alpha(z)$ because the other case follows from a symmetric argument. 
In order to obtain a contradiction, as a first step, in \Cref{LEMMA-1}, we derive lower and upper estimates for the quantity $u(x)-u(z)$ by using  the counter-assumption and the DPP \eqref{DPP2}. These estimates imply an inequality in terms of $f$, see the estimate \eqref{midrangeineq-f} below. After that, in the key \Cref{PROPOSITION}, we show precisely the opposite (strict) inequality for $f$, \eqref{f>midrange F}, mainly using the properties of the comparison function $f$, getting a contradiction and implying the desired Lipschitz estimate \eqref{aim}.

\begin{lemma}\label{LEMMA-1}
Given a function $u$ satisfying \eqref{DPP2}, suppose that the counter-assumption \eqref{contra} holds. Then, for any $\eta>0$, there exist $x,z\in B_r$ such that the comparison function satisfies
\begin{equation}\label{contra2}
				u(x)-u(z)-f(x,z)\geq K-\eta
\end{equation}
and
\begin{equation}\label{midrangeineq-f}
				f(x,z) \leq \midr_{\abs{\nu_x}=\abs{\nu_z}=1}F(f,x,z,\nu_x,\nu_z,\varepsilon) +2\eta,
\end{equation}
where $F$ is the function defined by
\begin{equation}\label{F-def}
\begin{split}
				F(x,z,\nu_x,\nu_z):\,= ~& F(f,x,z,\nu_x,\nu_z,\varepsilon):\,= \alpha(z)f(x+\varepsilon\nu_x,z+\varepsilon\nu_z) \\
				~& +\beta(x)\dashint_{\B^{\e_1}}f(x+\varepsilon\,\P_{\nu_x}\zeta,z+\varepsilon\,\P_{\nu_z}\zeta)\ d\mathcal{L}^{n-1}(\zeta) \\
				~& +(\alpha(x)-\alpha(z))\dashint_{\B^{\e_1}}f(x+\varepsilon\nu_x,z+\varepsilon\,\P_{\nu_z}\zeta)\ d\mathcal{L}^{n-1}(\zeta),
\end{split}
\end{equation}
with $\P_{\nu_x},\P_{\nu_z}\in O(n)$ satisfying $\P_{\nu_x}\e_1=\nu_x$ and $\P_{\nu_z}\e_1=\nu_z$.
\end{lemma}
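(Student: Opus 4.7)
The plan is to extract almost-extremal evaluation points from the counter-assumption, apply the DPP at both game tokens, and then rearrange the coefficients $\alpha$ and $\beta$ so that the pointwise counter-assumption can be fed into every translated difference of $u$.

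By definition of the supremum in \eqref{contra}, for any $\eta>0$ one can pick $(x,z)\in B_r\times B_r$ satisfying \eqref{contra2}. A first useful observation is that the pointwise inequality $u(x')-u(z')\leq K+f(x',z')$ actually holds for every $x',z'\in B_{2r}$, not only in $B_r$: for $x',z'\in B_{2r}\setminus B_r$ it is a consequence of \eqref{ineq-f1}, the identity $f=f_1-f_2$ with $\sup f_2=C^{2N}\varepsilon$, and the strict inequality $C^{2N}\varepsilon<K$ from \eqref{contra}. Assuming, as in the preceding discussion, that $\alpha(x)\geq\alpha(z)$, the next step would be to split
\[
\alpha(x)=\alpha(z)+(\alpha(x)-\alpha(z)),\qquad \beta(z)=\beta(x)+(\alpha(x)-\alpha(z))
\]
inside the expression $\A_\varepsilon u(x,\nu_x)-\A_\varepsilon u(z,\nu_z)$ (with $\A_\varepsilon u$ written in the form \eqref{Au2}) and regroup. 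This produces the decomposition
\begin{align*}
\A_\varepsilon u(x,\nu_x)-\A_\varepsilon u(z,\nu_z)={}&\alpha(z)\bigl[u(x+\varepsilon\nu_x)-u(z+\varepsilon\nu_z)\bigr]\\
&{}+\beta(x)\dashint_{\B^{\e_1}}\bigl[u(x+\varepsilon\P_{\nu_x}\zeta)-u(z+\varepsilon\P_{\nu_z}\zeta)\bigr]\,d\Ln(\zeta)\\
&{}+(\alpha(x)-\alpha(z))\dashint_{\B^{\e_1}}\bigl[u(x+\varepsilon\nu_x)-u(z+\varepsilon\P_{\nu_z}\zeta)\bigr]\,d\Ln(\zeta),
\end{align*}
in which all three scalar weights are nonnegative and sum to $\alpha(x)+\beta(x)=1$. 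Feeding the extended pointwise bound $u(\cdot)-u(\cdot)\leq K+f(\cdot,\cdot)$ into each bracket then yields, uniformly in $|\nu_x|=|\nu_z|=1$,
\[
\A_\varepsilon u(x,\nu_x)-\A_\varepsilon u(z,\nu_z)\;\leq\;K+F(x,z,\nu_x,\nu_z).
\]

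The final step is to pass to the midrange in $(\nu_x,\nu_z)$. Since the sup (respectively, inf) of the left-hand side over \emph{independent} unit vectors equals $\sup_{\nu_x}\A_\varepsilon u(x,\nu_x)-\inf_{\nu_z}\A_\varepsilon u(z,\nu_z)$ (respectively, the two roles swapped), the half-sum recombines to $\midr_{\nu_x}\A_\varepsilon u(x,\nu_x)-\midr_{\nu_z}\A_\varepsilon u(z,\nu_z)=u(x)-u(z)$ by \eqref{DPP2}. Taking the midrange in the pointwise bound thus delivers $u(x)-u(z)\leq K+\midr F$, up to an $\eta$ error absorbing the fact that the suprema in the DPP are only approximately attained. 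Subtracting $K$ and invoking \eqref{contra2} produces \eqref{midrangeineq-f}. I expect the main obstacle to be this initial extension of the counter-assumption from $B_r$ to $B_{2r}$, which is essential because the DPP evaluates $u$ at $\varepsilon$-translates that may escape $B_r$; once that is settled, the coefficient split that isolates the $(\alpha(x)-\alpha(z))$-piece of $F$ is the bookkeeping that makes all three weights add to $1$ and allows the one-line application of the pointwise bound.
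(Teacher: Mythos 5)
Your proposal is correct and follows essentially the same route as the paper: the same coefficient split $\alpha(x)=\alpha(z)+(\alpha(x)-\alpha(z))$, $\beta(z)=\beta(x)+(\alpha(x)-\alpha(z))$ producing the three nonnegative weights, the same application of the pointwise counter-assumption bound to each bracket, and the same passage to the midrange. Two small remarks. First, your final midrange step can be made cleaner than you indicate: since $\midr$ is defined with $\sup$ and $\inf$ (not $\max$ and $\min$), the pointwise inequality $\A_\varepsilon u(x,\nu_x)-\A_\varepsilon u(z,\nu_z)\leq K+F(x,z,\nu_x,\nu_z)$ immediately yields $u(x)-u(z)\leq K+\midr F$ by monotonicity of the midrange under pointwise domination, with no attainment issues and no extra $\eta$; the only $\eta$ is the one coming from the near-extremality in \eqref{contra2}, so you actually get $f(x,z)\leq\midr F+\eta$, which is stronger than the $+2\eta$ in the statement. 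The paper instead implements this monotonicity ``by hand'' via approximate extremizers, which costs it the second $\eta$. Second, your observation that the bound $u(x')-u(z')\leq K+f(x',z')$ must extend beyond $B_r$ to accommodate the $\varepsilon$-translates is a genuine point the paper passes over silently; just note that your verification via \eqref{ineq-f1} only covers the case where both $x'$ and $z'$ lie in $B_{2r}\setminus B_r$, not the mixed case with one point in $B_r$ and the other in the annulus, so a word about that case (or a slightly enlarged domain for the supremum defining $K$) is still owed -- but this is a technicality shared with the paper, not a defect of your argument.
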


\begin{proof}
By the counter-assumption \eqref{contra}, given $\eta>0$, we can immediately choose $x,z\in B_r$ so that \eqref{contra2} holds. To estimate $u(x)-u(z)$ from above, by recalling the DPP \eqref{DPP2} we have
\begin{equation}\label{u(x)-u(z)}
\begin{split}
				2[u(x)-u(z)] 
				= ~& 2\midr_{\abs{\nu_x}=1}\A_\varepsilon u(x,\nu_x) - 2\midr_{\abs{\nu_z}=1}\A_\varepsilon u(z,\nu_z) \\
				= ~& \sup_{\nu_x}\A_\varepsilon u(x,\nu_x) - \inf_{\nu_z}\A_\varepsilon u(z,\nu_z) \\
				~& + \inf_{\nu_x}\A_\varepsilon u(x,\nu_x) - \sup_{\nu_z}\A_\varepsilon u(z,\nu_z),
\end{split}
\end{equation}
where all the $\sup$ and $\inf$ are considered over the unit sphere. 
Next we look at the difference between $\A_\varepsilon u(x,\nu_x)$ and $\A_\varepsilon u(z,\nu_z)$. Using the definition \eqref{Au2}, adding and subtracting the terms
\begin{equation*}
				\alpha(z)u(x+\varepsilon\nu_x)-\beta(x)\dashint_{\B^{\e_1}}u(z+\varepsilon\,\P_{\nu_z}\zeta)\ d\Ln(\zeta),
\end{equation*}
and since $\beta(x)-\beta(z)=-(\alpha(x)-\alpha(z))$, we can write
\begin{equation*}
\begin{split}
				\A_\varepsilon u(x,\nu_x) - \A_\varepsilon u(z,\nu_z) \\
				= ~& \alpha(z)\brackets{u(x+\varepsilon\nu_x)-u(z+\varepsilon\nu_z)} \\
				~& +\beta(x)\dashint_{\B^{\e_1}}\brackets{u(x+\varepsilon\,\P_{\nu_x}\zeta)-u(z+\varepsilon\,\P_{\nu_z}\zeta)}\ d\Ln(\zeta) \\
				~& +(\alpha(x)-\alpha(z))\dashint_{\B^{\e_1}}\brackets{u(x+\varepsilon\nu_x)-u(z+\varepsilon\,\P_{\nu_z}\zeta)}\ d\Ln(\zeta),
\end{split}
\end{equation*}
for any pair of vectors $\abs{\nu_x}=\abs{\nu_z}=1$ and orthogonal matrices $\P_{\nu_x}$ and $\P_{\nu_z}$ satisfying $\P_{\nu_x}\e_1=\nu_x$ and $\P_{\nu_z}\e_1=\nu_z$. By the definition of $K$ in equation \eqref{contra}, the inequality
\begin{equation*}
				u(x')-u(z')\leq K+f(x',z')
\end{equation*}
holds for every $x',z'\in B_r$. Applying this inequality to each of the terms in the equation above we get
\begin{equation}\label{Au(x)-Au(z)}
				\A_\varepsilon u(x,\nu_x) - \A_\varepsilon u(z,\nu_z)\leq K+F(x,z,\nu_x,\nu_z),
\end{equation}
where $F$ is the function defined in \eqref{F-def}.
\\

Now, let $\abs{\widetilde\nu_x}=\abs{\widetilde\nu_z}=1$ such that
\begin{equation*}
				\begin{cases}
				\displaystyle \A_\varepsilon u(x,\widetilde\nu_x) \geq \sup_{\nu_x}\A_\varepsilon u(x,\nu_x)-\frac{\eta}{2}, \vspace{10pt} \\
				\displaystyle \A_\varepsilon u(z,\widetilde\nu_z) \leq \inf_{\nu_z}\A_\varepsilon u(z,\nu_z)+\frac{\eta}{2}.
				\end{cases}
\end{equation*}
Then, using \eqref{Au(x)-Au(z)}, we get
\begin{equation*}
\begin{split}
				\sup_{\nu_x}\A_\varepsilon u(x,\nu_x) - \inf_{\nu_z}\A_\varepsilon u(z,\nu_z)
				\leq ~& \A_\varepsilon u(x,\widetilde\nu_x) - \A_\varepsilon u(z,\widetilde\nu_z) + \eta \\
				\leq ~& K + F(x,z,\widetilde\nu_x,\widetilde\nu_z) + \eta \\
				\leq ~& K + \sup_{\nu_x,\nu_z}F(x,z,\nu_x,\nu_z) + \eta.
\end{split}
\end{equation*}

On the other hand, let $\abs{\widehat\nu_x}=\abs{\widehat\nu_z}=1$ such that
\begin{equation*}
				F(x,z,\widehat\nu_x,\widehat\nu_z)
				\leq \inf_{\nu_x,\nu_z}F(x,z,\nu_x,\nu_z) + \eta.
\end{equation*}
Hence,
\begin{equation*}
\begin{split}
				\inf_{\nu_x}\A_\varepsilon u(x,\nu_x) - \sup_{\nu_z}\A_\varepsilon u(z,\nu_z)
				\leq ~& \A_\varepsilon u(x,\widehat\nu_x) - \A_\varepsilon u(z,\widehat\nu_z) \\
				\leq ~& K + F(x,z,\widehat\nu_x,\widehat\nu_z) \\
				\leq ~& K + \inf_{\nu_x,\nu_z}F(x,z,\nu_x,\nu_z) + \eta.
\end{split}
\end{equation*}
Then, combining these estimates with \eqref{u(x)-u(z)}, we obtain
\begin{equation*}
\begin{split}
				2[u(x)-u(z)] 
				= ~& \sup_{\nu_x}\A_\varepsilon u(x,\nu_x) - \inf_{\nu_z}\A_\varepsilon u(z,\nu_z) \\
				~& + \inf_{\nu_x}\A_\varepsilon u(x,\nu_x) - \sup_{\nu_z}\A_\varepsilon u(z,\nu_z) \\
				\leq ~& 2K + \sup_{\nu_x,\nu_z}F(x,z,\nu_x,\nu_z) + \inf_{\nu_x,\nu_z}F(x,z,\nu_x,\nu_z) +2\eta.
\end{split}
\end{equation*}
Dividing by $2$ and using the midrange notation we obtain
\begin{equation*}
				u(x)-u(z) \leq K + \midr_{\abs{\nu_x}=\abs{\nu_z}=1}F(x,z,\nu_x,\nu_z) +\eta.
\end{equation*}
Finally, this together with \eqref{contra2} yields \eqref{midrangeineq-f}.
\end{proof}

Next we state the key lemma, which together with \Cref{LEMMA-1} implies the result.

\begin{lemma}\label{PROPOSITION}
Let $f$ be the comparison function and let $F$ be the function defined in \eqref{F-def}. For small enough $\eta=\eta(\varepsilon)>0$ and $x,z\in B_r$ as in \Cref{LEMMA-1}, it holds that
\begin{equation}\label{f>midrange F}
				f(x,z)>\midr_{\abs{\nu_x}=\abs{\nu_z}=1}F(f,x,z,\nu_x,\nu_z,\varepsilon)+2\eta.
\end{equation}
\end{lemma}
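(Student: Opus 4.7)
The plan is to split the proof according to whether the step function $f_2(x,z)$ vanishes. In the main case $|x-z|>\tfrac{N}{10}\varepsilon$ (so $f_2=0$ both at $(x,z)$ and in the relevant $\varepsilon$-neighbourhood), I would work entirely with the smooth piece $f_1$, and aim to bound
\[
\tfrac12\sup F + \tfrac12\inf F < f_1(x,z) - 2\eta
\]
by exhibiting, for each near-maximising pair $(\nu_x^*,\nu_z^*)$ of the supremum, an admissible pair $(\nu_x^{\diamond},\nu_z^{\diamond})$ such that $F(\cdot,\nu_x^*,\nu_z^*)+F(\cdot,\nu_x^{\diamond},\nu_z^{\diamond}) < 2f_1(x,z)-4\eta$. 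This is the point where the threshold-angle strategy from the introduction enters.

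The first step is to Taylor-expand each of the three pieces of $F$ to second order in $\varepsilon$ around $(x,z)$, using $\int_{\B^{\e_1}}\zeta\,d\Ln=0$ to eliminate the first-order contributions from the averaged terms. The Hessian of $f_1$ in a direction $(\xi,\eta)$ splits into the concave piece $C\omega''(|x-z|)\langle\mathbf{n},\xi-\eta\rangle^2$ (writing $\mathbf{n}=(x-z)/|x-z|$), the positive piece $C\omega'(|x-z|)|x-z|^{-1}|(\xi-\eta)_\perp|^2$, and $2M|\xi+\eta|^2$. The choice $\gamma=1+s$ makes $-\omega''(|x-z|)=\gamma(\gamma-1)\omega_0|x-z|^{s-1}$ scale exactly like $|\alpha(x)-\alpha(z)|/|x-z|\leq C_\alpha|x-z|^{s-1}$, which is the scaling match that lets concavity beat the H\"older error.

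I would then distinguish two sub-cases via a threshold $\theta$ to be tuned. If $|\langle\mathbf{n},\nu_x^*-\nu_z^*\rangle|\geq\theta$ (opponent's step moves along $x-z$), I take the antipodal strategy $(\nu_x^{\diamond},\nu_z^{\diamond})=(-\nu_x^*,-\nu_z^*)$ with the compatible rotations $\P_{-\nu}=-\P_\nu$ (admissible since $-\P_\nu\in O(n)$ and $-\P_\nu\e_1=-\nu$). The antipodal sum kills all first-order contributions, leaving a second-order expression whose leading negative piece, of order $-\alpha_\mathrm{min}C\gamma(\gamma-1)\omega_0\varepsilon^2|x-z|^{s-1}\theta^2$, should dominate the positive $\omega'/|x-z|$ and $M$ contributions as well as the H\"older error $(\alpha(x)-\alpha(z))\cdot O(\varepsilon^2)\leq C_\alpha|x-z|^s\cdot O(\varepsilon^2)$, provided $\omega_0$ is chosen large. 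If instead $|\langle\mathbf{n},\nu_x^*-\nu_z^*\rangle|<\theta$ (opponent plays a small turn), I use the pull-together strategy $(\nu_x^{\diamond},\nu_z^{\diamond})=(-\mathbf{n},\mathbf{n})$: the opponent's first-order contribution is at most $\varepsilon C\omega'(|x-z|)\alpha(z)\theta+O(\varepsilon C_\alpha|x-z|^s)$, whereas mine is $\leq -2\varepsilon C\omega'(|x-z|)\alpha_\mathrm{min}$, so for $\theta$ small relative to $\alpha_\mathrm{min}$ the combined first-order term is strictly negative and absorbs the remaining $O(\varepsilon^2)$ errors.

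In the remaining range $|x-z|\leq\tfrac{N}{10}\varepsilon$, the $f_1$-contribution to $\midr F - f_1(x,z)$ is only $O(\varepsilon)$, and it is beaten by the jumps of $-f_2$: any admissible $\varepsilon$-move shifts $(x,z)$ across at most $O(1)$ annuli $A_i$, so the change in $-f_2$ is at least of order $(C^2-1)C^{2(N-i-1)}\varepsilon$, which dominates once $C$ and $N$ are chosen large enough. I expect the main obstacle to be the balance inside the antipodal sub-case: the $\omega''$-term must simultaneously defeat the perpendicular $\omega'/|x-z|$ term, the $M$-term and the H\"older error across all scales $\varepsilon\ll|x-z|\lesssim r$, which forces an intricate joint choice of $\omega_0,\theta,N,C$. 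A second subtle point is the coupled choice of the rotation matrices $\P_{\nu_x^*},\P_{\nu_z^*}$: they must be selected so that after averaging over $\zeta\in\B^{\e_1}$ the concave $\omega''$-term actually appears with a definite negative sign in the direction $\mathbf{n}$, rather than being merely bounded above by zero.
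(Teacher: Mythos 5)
Your overall architecture matches the paper (Taylor expansion, threshold-angle strategy choosing between an antipodal response $(-\nu_x,-\nu_z)$ and a pull-together response $(-\v,\v)$, and using the annular step $f_2$ in the short-range case), but the threshold you propose is set at the wrong end and this creates a genuine gap. You split at a small fixed angle $\theta$: antipodal when $|(\nu_x-\nu_z)_V|\ge\theta$, pull-together when $|(\nu_x-\nu_z)_V|<\theta$. The paper instead splits at $(\nu_x-\nu_z)_V^2\ge 4-\Theta$ with $\Theta=|x-z|^s$, i.e.\ a threshold \emph{very close to} the maximal value $2$, and uses antipodal only in that narrow near-optimal window. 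The difference is not cosmetic. In the antipodal case the only help comes from the $\omega''$ term, whose size is $\sim\omega_0|x-z|^{s-1}(\nu_x-\nu_z)_V^2\varepsilon^2$, and it must absorb the positive $\omega'/|x-z|$ terms that carry coefficients $(\nu_x-\nu_z)_{V^\bot}^2$ and $|\nu_x+\nu_z|^2$. Under your threshold these coefficients can be $O(1)$ (e.g.\ take $\nu_x$, $\nu_z$ at a moderate angle so $(\nu_x-\nu_z)_V\approx\tfrac12$ while $(\nu_x-\nu_z)_{V^\bot}^2$, $|\nu_x+\nu_z|^2$ are bounded below), and then the comparison $\omega_0\theta^2|x-z|^{s-1}$ versus $1/|x-z|$ reduces to $\omega_0\theta^2|x-z|^s\gtrsim 1$, which fails as $|x-z|\to 0$ for any fixed $\omega_0$. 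The paper's condition $(\nu_x-\nu_z)_V^2\ge 4-\Theta$ is precisely what forces $(\nu_x-\nu_z)_{V^\bot}^2\le\Theta$ and $|\nu_x+\nu_z|^2\le\Theta$, so that the positive terms are $O(|x-z|^s)$ and can be beaten by $\omega''$ for $\omega_0$ large, uniformly in $|x-z|$.

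Symmetrically, your pull-together sub-case is too narrow: you only cover $|(\nu_x-\nu_z)_V|<\theta$, where the first-order advantage is $\approx(2-\theta)C\omega'\alpha(z)\varepsilon$ and everything is easy. But the whole middle range $\theta\le|(\nu_x-\nu_z)_V|<\sqrt{4-\Theta}$ is left uncovered. To handle it with pull-together, the first-order gain is only $C\omega'\alpha(z)(2-(\nu_x-\nu_z)_V)\varepsilon$, which can be as small as $O(|x-z|^s)\varepsilon$, while the positive second-order perpendicular terms appear a priori as $O(1)\cdot\varepsilon^2/|x-z|\sim\varepsilon/N$, and no fixed $N$ closes that gap for all $|x-z|$. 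The crucial step you are missing is the paper's $\vartheta$-trick in Case~2: the identity $(\nu_x-\nu_z)_V^2+(\nu_x-\nu_z)_{V^\bot}^2=|\nu_x-\nu_z|^2\le 4$ implies $(\nu_x-\nu_z)_{V^\bot}^2\le 4\bigl(2-(\nu_x-\nu_z)_V\bigr)$, and the parallelogram law gives the analogous bound for $|\nu_x+\nu_z|^2$. This makes the positive second-order terms scale exactly like the first-order gain, so one factor of $\varepsilon/|x-z|<10/N$ suffices and a single large $N$ works at all scales. Without this identity the argument does not go through in the middle regime. (A minor further point: your worry about choosing $\P_{\nu_x}$, $\P_{\nu_z}$ so that the averaged $\omega''$-term is "definitely negative" is misdirected; the paper simply drops the averaged $\omega''$-contributions using $\omega''\le 0$, and the matrix coupling in the appendix is used to control the perpendicular $\omega'$-term via $|\P_{\nu_x}\zeta-\P_{\nu_z}\zeta|\le|\nu_x+\nu_z|$, not to extract a sign from $\omega''$.)
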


The proof of this lemma, which is the core of the present paper, will be presented in \Cref{Section2,remarks}, where a distinction depending on the value of $\abs{x-z}$ is made.


\section{Proof of the key \Cref{PROPOSITION}. Case $\abs{x-z}>\frac{N}{10}\varepsilon$}\label{Section2}

In this case, $f_2(x,z)=0$ and
\begin{equation}\label{f}
				f(x,z) = f_1(x,z) = C\omega(\abs{x-z})+M\abs{x+z}^2,
\end{equation}
where $x,z\in B_r$ have been fixed in \Cref{LEMMA-1} satisfying \eqref{contra2} with some fixed $\eta>0$. Next fix $|\nu_x|=|\nu_z|=1$ such that
\begin{equation*}
				F(x,z,\nu_x,\nu_z) \geq \sup_{\abs{\widehat\nu_x}=\abs{\widehat\nu_z}=1}F(x,z,\widehat\nu_x,\widehat\nu_z)-\eta.
\end{equation*}
Then, for any pair of vectors $|\widetilde\nu_x|=|\widetilde\nu_z|=1$, it holds
\begin{equation*}
				2\,\midr_{\abs{\widehat\nu_x}=\abs{\widehat\nu_z}=1}F(x,z,\widehat\nu_x,\widehat\nu_z)\leq F(x,z,\nu_x,\nu_z)+F(x,z,\widetilde\nu_x,\widetilde\nu_z)+\eta.
\end{equation*}
Thus, \Cref{PROPOSITION} will follow if we can find appropriate vectors $|\widetilde\nu_x|=|\widetilde\nu_z|=1$ such that
\begin{equation}\label{F+F-2f<0}
				F(x,z,\nu_x,\nu_z)+F(x,z,\widetilde\nu_x,\widetilde\nu_z)-2f(x,z)<-5\eta.
\end{equation}
This we will show by using Taylor's expansion.

But before this, since the explicit formula for $\omega$ given in \eqref{omega} only holds in the range $[0,\omega_1]$, first we need to choose large enough $C$ ensuring that $\abs{x-z}\leq\omega_1$. From \eqref{contra2} we have, in particular, that $u(x)-u(z)-f(x,z)>0$ and, in consequence,
\begin{equation*}
				2\sup_{B_r}\abs{u}\geq u(x)-u(z)>f(x,z)\geq C\omega(\abs{x-z}).
\end{equation*}
Since $1<\gamma<2$, we have
\begin{equation*}
				\omega(\omega_1)=\pare{1-\frac{1}{2\gamma}}\pare{\frac{1}{2\gamma\omega_0}}^{1/(\gamma-1)}
				>2\pare{\frac{1}{16\omega_0}}^{1/(\gamma-1)}.
\end{equation*}
Hence, for all
\begin{equation}\label{C0}
				C>(16\omega_0)^{1/(\gamma-1)}\sup_{B_r}\abs{u},
\end{equation}
we observe that
\begin{equation*}
				C\omega(\omega_1)>2\sup_{B_r}\abs{u}>C\omega(\abs{x-z}).
\end{equation*}
Then $\omega(\abs{x-z})\leq\omega(\omega_1)$ and $\abs{x-z}\leq\omega_1$ follows from the monotonicity of $\omega$ whenever \eqref{C0} holds. In addition, by imposing
\begin{equation}\label{omega0-omega1}
				\omega_0\geq\frac{1}{2} \quad\quad \mbox{ (and thus $\omega_1\leq1$)} ,
\end{equation}
we also ensure that $\abs{x-z}\leq 1$.


\subsection{Taylor's expansion for $F$ and game intuition}

First, we need to compute the second order Taylor's expansion of $f(x+h_x,z+h_z)$, where $h_x$ and $h_z$ denote column vectors in $\R^n$. For that purpose, we start by introducing the following notation, that will be useful in what follows: for fixed $x\neq z$ in $B_r$, let
\begin{equation*}
				\v:\,=\frac{x-z}{\abs{x-z}},
\end{equation*}
and denote by $V$ the vector space $V=\mathrm{span}\braces{\v}$. Given $h\in\R^n$, we denote by $h_V$ the projection of $h$ on the space $V$ and by $h_{V^\bot}$ the modulus of the projection on the $(n-1)$-dimensional space of vectors orthogonal to $\v$. That is, 
\begin{equation}\label{hV}
\begin{split}
				& h_V:\,=\prodin{\v}{h}, \\
				& h_V^2:\,=\prodin{\v}{h}^2
				=\trace\braces{\W\cdot h\otimes h},\\
				& h_{V^\bot}^2 :\,=\abs{h}^2-h_V^2
				=\trace\braces{(\I-\W)\cdot h\otimes h}.
\end{split}
\end{equation}

\begin{lemma}
Let $f$ be the comparison function \eqref{f}. Then the second order Taylor's expansion of $f$ is
\begin{multline}\label{TAYLOR0000}
				f(x+h_x,z+h_z)-f(x,z) \\
\begin{split}
				\leq ~& C\,\omega'(\abs{x-z})(h_x-h_z)_V+2M\prodin{x+z}{h_x+h_z} \\
				~& + \frac{C}{2}\,\omega''(\abs{x-z})(h_x-h_z)_V^2 + \frac{C}{2}\,\frac{\omega'(\abs{x-z})}{\abs{x-z}}(h_x-h_z)_{V^\bot}^2 \\
				~& + (4M+1)\abs{x-z}^{\gamma-2}\,\varepsilon^2,
\end{split}
\end{multline}
for every $\abs{h_x},\abs{h_z}\leq\varepsilon$.
\end{lemma}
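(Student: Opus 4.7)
The plan is to split $f = C\omega(\abs{x-z}) + M\abs{x+z}^2$ and Taylor-expand each piece separately in the increment $(h_x,h_z)$. The quadratic piece is handled by the exact identity
\[
M\abs{(x+h_x)+(z+h_z)}^2 - M\abs{x+z}^2 = 2M\prodin{x+z}{h_x+h_z} + M\abs{h_x+h_z}^2,
\]
whose error term satisfies $M\abs{h_x+h_z}^2 \leq 4M\varepsilon^2$. Since the choice \eqref{C0} together with \eqref{omega0-omega1} forces $\abs{x-z}\leq\omega_1\leq 1$, and $\gamma<2$, we have $\abs{x-z}^{\gamma-2}\geq 1$, so this error is majorized by $4M\abs{x-z}^{\gamma-2}\varepsilon^2$, which accounts for the $4M$ contribution in \eqref{TAYLOR0000}.

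For the main piece set $\phi(y):\,=\omega(\abs{y})$, $d:\,=x-z$ and $k:\,=h_x-h_z$. A direct differentiation yields
\[
\nabla\phi(d) = \omega'(\abs{d})\v, \qquad D^2\phi(d) = \omega''(\abs{d})\W + \frac{\omega'(\abs{d})}{\abs{d}}(\I-\W).
\]
The second-order Taylor formula $\phi(d+k)=\phi(d)+\nabla\phi(d)\cdot k+\tfrac{1}{2}k^\top D^2\phi(d) k+R_3(k)$, combined with the decomposition \eqref{hV} yielding $k^\top\W k = k_V^2$ and $k^\top(\I-\W)k = k_{V^\bot}^2$, reproduces exactly the first-order $\omega'(\abs{d})k_V$ term and both quadratic terms on the right-hand side of \eqref{TAYLOR0000}. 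What remains is to estimate the third-order remainder $R_3(k)=\tfrac{1}{2}\int_0^1(1-t)^2 D^3\phi(d+tk)[k,k,k]\,dt$.

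The main obstacle is controlling $D^3\phi$ pointwise. Differentiating once more and using $\abs{\omega'}\leq 1$, $\abs{\omega''(t)}=\gamma(\gamma-1)\omega_0 t^{\gamma-2}$ and $\abs{\omega'''(t)}=\gamma(\gamma-1)\abs{\gamma-2}\omega_0 t^{\gamma-3}$, one obtains a bound of the form $\abs{D^3\phi(y)}\leq c(n,\gamma)\omega_0\abs{y}^{\gamma-3}$ wherever the explicit formula \eqref{omega} applies. Since in this section $\abs{d}>\tfrac{N}{10}\varepsilon$ and $\abs{k}\leq 2\varepsilon$, a triangle inequality argument gives $\abs{d+tk}\geq\abs{d}/2$ for every $t\in[0,1]$ whenever $N$ is large enough, so that $\abs{D^3\phi(d+tk)}\leq c'(n,\gamma)\omega_0\abs{d}^{\gamma-3}$ uniformly in $t$.

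Combining these gives $\abs{R_3(k)}\leq c''(n,\gamma)\omega_0\abs{d}^{\gamma-3}\abs{k}^3$. Bounding one factor of $\abs{k}$ by $2\varepsilon$ and using $\varepsilon\leq\tfrac{10}{N}\abs{d}$ to trade a further $\varepsilon$ for a factor of $\abs{d}$ yields $C\abs{R_3(k)}\leq c'''(n,\gamma)\,C\omega_0\,N^{-1}\abs{d}^{\gamma-2}\varepsilon^2$. Provided $N$ is chosen large enough (consistent with the conditions \eqref{N1} and \eqref{N2} imposed on $N$ later) so that this last constant is at most $1$, the remainder contributes the $+\abs{x-z}^{\gamma-2}\varepsilon^2$ term in \eqref{TAYLOR0000}; adding the $4M\abs{x-z}^{\gamma-2}\varepsilon^2$ error coming from the quadratic part yields the stated $(4M+1)\abs{x-z}^{\gamma-2}\varepsilon^2$ and completes the proof.
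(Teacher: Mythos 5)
Your plan — split $f$ into $C\omega(\abs{x-z})+M\abs{x+z}^2$, handle the quadratic piece by the exact polarization identity, and reduce the $\omega$-piece to a Taylor expansion of $\phi(y)=\omega(\abs y)$ in the single variable $d=x-z$ — is the same decomposition the paper uses (the paper just computes the block Hessian in $\R^{2n}$ before this simplification kicks in), and your expressions for $\nabla\phi(d)$ and $D^2\phi(d)$ and your treatment of the $4M\varepsilon^2\leq 4M\abs{x-z}^{\gamma-2}\varepsilon^2$ error are correct. The weak point is the claimed pointwise bound $\abs{D^3\phi(y)}\leq c(n,\gamma)\,\omega_0\abs{y}^{\gamma-3}$. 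Differentiating $D^2\phi$ once more gives
\[
D^3\phi(y)[k,k,k]\;=\;\omega'''(\abs y)\,k_V^3\;+\;\frac{3\,k_V\,k_{V^\bot}^2}{\abs y}\Bigl(\omega''(\abs y)-\frac{\omega'(\abs y)}{\abs y}\Bigr),
\]
and with $\omega(t)=t-\omega_0 t^\gamma$ one gets $\omega''(t)-\omega'(t)/t=-1/t+\gamma(2-\gamma)\omega_0 t^{\gamma-2}$. Thus $D^3\phi$ carries, in addition to the $\omega_0\abs y^{\gamma-3}$-sized terms you track, a contribution $-3k_Vk_{V^\bot}^2/\abs y^2$ coming from the \emph{affine} part $t\mapsto t$ of $\omega$. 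This term has no $\omega_0$ prefactor, and since $\gamma>1$ forces $\abs y^{-2}>\abs y^{\gamma-3}$ for $\abs y<1$, it is \emph{not} majorized by $c\,\omega_0\abs y^{\gamma-3}$; the discrepancy grows like $\abs y^{-s}$ as $\abs y\downarrow 0$. Consequently the step $C\abs{R_3(k)}\leq c'''(n,\gamma)\,C\omega_0\,N^{-1}\abs d^{\gamma-2}\varepsilon^2$ does not follow from $\abs{D^3\phi}$ alone: the affine-part contribution, which can be positive when $k_V<0$, is roughly $C\abs{k_V}k_{V^\bot}^2/\abs d^2$, and for $\abs d$ of order $N\varepsilon/10$ with $\varepsilon\to 0$ this exceeds $\abs d^{\gamma-2}\varepsilon^2$ by an unbounded factor.

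For what it is worth, this is precisely the same step the paper elides: its bound $\mathcal E_{x,z}(h_x,h_z)\leq\gamma(\gamma-1)(2-\gamma)C\omega_0\abs{(h_x,h_z)}^3(\abs{x-z}-2\varepsilon)^{\gamma-3}$ accounts only for the $\omega'''$ contribution in the Lagrange remainder and not for the $\abs\cdot^{-2}$-sized contribution of $D^3\abs\cdot$. A complete argument has to keep this extra term and use that it is one-signed (favorable) when $k_V\geq 0$, and bounded by $\abs{k_V}k_{V^\bot}^2/\abs d^2$ when $k_V<0$ — a quantity that becomes harmless in the lemma's downstream applications because at the relevant choices of directions the transverse projection $k_{V^\bot}^2$ is itself constrained (by $\Theta$ in Case 1, and vanishes identically in Case 2). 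As written, though, your proof — like the paper's — does not justify the uniform remainder bound claimed in the lemma, and you should either restrict the statement to the moves actually used, or insert the sharper third-derivative estimate above before invoking $\abs{d}>\tfrac{N}{10}\varepsilon$.
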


\begin{proof}
We need to compute each term in the second order Taylor's expansion
\begin{multline}\label{TAYLOR00}
				f(x+h_x,z+h_z)-f(x,z) \\
				=\prodin{Df(x,z)}{
				\brackets{\begin{array}{c}
				h_x \\
				h_z
				\end{array}}}
				+\frac{1}{2}\prodin{D^2f(x,z)
				\brackets{\begin{array}{c}
				h_x \\
				h_z
				\end{array}}}{\brackets{\begin{array}{c}
				h_x \\
				h_z
				\end{array}}}
				+\mathcal{E}_{x,z}(h_x,h_z).
\end{multline}
For that reason, we will make use of the formulas for the gradient and the Hessian
\begin{equation*}
				D_x\abs{x-z}=\frac{x-z}{\abs{x-z}} \quad \quad \mbox{ and } \quad \quad D_{xx}\abs{x-z}=\frac{1}{\abs{x-z}}\pare{\I-\frac{x-z}{\abs{x-z}}\otimes\frac{x-z}{\abs{x-z}}},
\end{equation*}
for $x\neq z$. Then, since $\v=\dfrac{x-z}{\abs{x-z}}$ and differentiating \eqref{f} we get
\begin{equation*}
\begin{split}
				& D_xf(x,z)=C\,\omega'(\abs{x-z})\v+2M(x+z), \\
				& D_zf(x,z)=-C\,\omega'(\abs{x-z})\v+2M(x+z), \\
				& D_{xx}f(x,z)=D_{zz}f(x,z)=\L+2M\I \quad \mbox{and} \\
				& D_{xz}f(x,z)=-\L+2M\I,
\end{split}
\end{equation*}
where
\begin{equation*}
				\L=C\,\omega''(\abs{x-z})\W+C\,\frac{\omega'(\abs{x-z})}{\abs{x-z}}(\I-\W).
\end{equation*}
Thus we obtain
\begin{equation*}
				Df(x,z)=C\,\omega'(\abs{x-z})
				\brackets{\begin{array}{c}
				\v \\
				-\v
				\end{array}}
				+2M\brackets{\begin{array}{c}
				x+z \\
				x+z
				\end{array}}
\end{equation*}
and
\begin{equation*}
				D^2f(x,z)=
				\brackets{\begin{array}{cc}
				\L & -\L \\
				-\L & \L
				\end{array}}
				+2M\brackets{\begin{array}{cc}
				\I & \I \\
				\I & \I
				\end{array}}.
\end{equation*}
Plugging these into the terms in \eqref{TAYLOR00} yields
\begin{equation*}
				\prodin{Df(x,z)}{\brackets{\begin{array}{c}
				h_x \\
				h_z
				\end{array}}}=C\,\omega'(\abs{x-z})\prodin{\v}{h_x-h_z}+2M\prodin{x+z}{h_x+h_z}
\end{equation*}
and
\begin{equation*}
\begin{split}
				\frac{1}{2}\prodin{D^2f(x,z)\brackets{\begin{array}{c}
				h_x \\
				h_z
				\end{array}}}{\brackets{\begin{array}{c}
				h_x \\
				h_z
				\end{array}}}
				= ~& \frac{1}{2}\trace\braces{D^2f(x,z)
				\brackets{\begin{array}{c}
				h_x \\
				h_z
				\end{array}}\otimes
				\brackets{\begin{array}{c}
				h_x \\
				h_z
				\end{array}}} \\
				= ~& \frac{1}{2}\trace\braces{
				\brackets{\begin{array}{cc}
				\L & -\L \\
				-\L & \L
				\end{array}}
				\brackets{\begin{array}{cc}
				h_x \otimes h_x & h_x \otimes h_z \\
				h_z \otimes h_x & h_z \otimes h_z
				\end{array}}} \\
				~& + M\trace\braces{
				\brackets{\begin{array}{cc}
				\I & \I \\
				\I & \I
				\end{array}}
				\brackets{\begin{array}{cc}
				h_x \otimes h_x & h_x \otimes h_z \\
				h_z \otimes h_x & h_z \otimes h_z
				\end{array}}} \\
				= ~& \frac{1}{2}\trace\braces{\L\cdot(h_x-h_z)\otimes(h_x-h_z)} \\
				~& + M\trace\braces{(h_x+h_z)\otimes(h_x+h_z)} \\
				= ~& \frac{C}{2}\,\omega''(\abs{x-z})\trace\braces{\W\cdot(h_x-h_z)\otimes(h_x-h_z)} \\
				~& + \frac{C}{2}\,\frac{\omega'(\abs{x-z})}{\abs{x-z}}\trace\braces{(\I-\W)\cdot(h_x-h_z)\otimes(h_x-h_z)} \\
				~& + M\abs{h_x+h_z}^2,
\end{split}
\end{equation*}
and replacing in the second order Taylor's expansion \eqref{TAYLOR00}, we obtain
\begin{multline}\label{TAYLOR0}
				f(x+h_x,z+h_z)-f(x,z) \\
\begin{split}
				= ~& C\,\omega'(\abs{x-z})\prodin{\v}{h_x-h_z}+2M\prodin{x+z}{h_x+h_z} \\
				~& + \frac{C}{2}\,\omega''(\abs{x-z})\trace\braces{\W\cdot(h_x-h_z)\otimes(h_x-h_z)} \\
				~& + \frac{C}{2}\,\frac{\omega'(\abs{x-z})}{\abs{x-z}}\trace\braces{(\I-\W)\cdot(h_x-h_z)\otimes(h_x-h_z)} \\
				~& + M\abs{h_x+h_z}^2 +\mathcal{E}_{x,z}(h_x,h_z).
\end{split}
\end{multline}
Moreover, 
since $\abs{x-z}\leq\omega_1$, 
by the explicit form of the function $\omega$, \eqref{omega},
\begin{equation*}
				\omega'''(t)=-\gamma(\gamma-1)(\gamma-2)\omega_0t^{\gamma-3},
\end{equation*}
for every $\abs{h_x},\abs{h_z}\leq\varepsilon$, by Taylor's theorem, it holds
\begin{equation*}
				\mathcal{E}_{x,z}(h_x,h_z)\leq \gamma(\gamma-1)(2-\gamma)C\omega_0\abs{\brackets{\begin{array}{c}
				h_x \\
				h_z
				\end{array}}}^3(\abs{x-z}-2\varepsilon)^{\gamma-3},
\end{equation*}
whenever $\abs{x-z}>2\varepsilon$. Since $1<\gamma<2$, using the hypothesis $\abs{x-z}>\frac{N}{10}\varepsilon$ and choosing large enough natural number $N\geq 40$ depending on $C$ and $\omega_0$,
\begin{equation}\label{N1}
				N>2^{11/2}\cdot 10\ C\omega_0,
\end{equation}
we can estimate
\begin{equation*}
\begin{split}
				\mathcal{E}_{x,z}(h_x,h_z) ~& \leq 2C\omega_0(2\varepsilon^2)^{3/2}\pare{\frac{\abs{x-z}}{2}}^{\gamma-3} \\
				~& < 2^{11/2}C\omega_0\,\frac{\varepsilon}{\abs{x-z}}\,\abs{x-z}^{\gamma-2}\,\varepsilon^2 \\
				~& < \frac{2^{11/2}\cdot 10\ C\omega_0}{N}\,\abs{x-z}^{\gamma-2}\,\varepsilon^2 \\
				~& \leq \abs{x-z}^{\gamma-2}\,\varepsilon^2.
\end{split}
\end{equation*}
On the other hand, since $\abs{x-z}\leq 1$ and $\gamma-2<0$, we have
\begin{equation*}
				M\abs{h_x+h_z}^2\leq 4M\varepsilon^2\leq 4M\abs{x-z}^{\gamma-2}\,\varepsilon^2,
\end{equation*}
and then the last two terms in \eqref{TAYLOR0} are bounded by
\begin{equation*}
				(4M+1)\,\abs{x-z}^{\gamma-2}\varepsilon^2.
\end{equation*}
Finally, recalling the notation introduced in \eqref{hV}, we obtain \eqref{TAYLOR0000}.
\end{proof}

Now, we utilize expansion \eqref{TAYLOR0000} for obtaining an estimate for the function $F$ defined in \eqref{F-def}.

\begin{lemma}\label{LEMMA-F}
Let $x,z$ as at the beginning of this section and $\abs{\nu_x}=\abs{\nu_z}=1$. Then, there is a pair of matrices $\P_{\nu_x}$ and $\P_{\nu_z}$ such that $\P_{\nu_x}\e_1=\nu_x$, $\P_{\nu_z}\e_1=\nu_z$  and the function $F$ defined in \eqref{F-def} satisfies
\begin{multline}\label{F-expansion}
				F(x,z,\nu_x,\nu_z)-f(x,z) \\
\begin{split}
				\leq ~& C\,\omega'(\abs{x-z})\brackets{\alpha(z)(\nu_x-\nu_z)_V+(\alpha(x)-\alpha(z))(\nu_x)_V}\,\varepsilon \\
				~& + 2M\prodin{x+z}{\alpha(z)(\nu_x+\nu_z)+(\alpha(x)-\alpha(z))\nu_x}\,\varepsilon \\
				~& + \frac{C}{2}\,\omega''(\abs{x-z})\,\alpha(z)(\nu_x-\nu_z)_V^2\,\varepsilon^2 \\
				~& +\frac{C}{2}\,\frac{\omega'(\abs{x-z})}{\abs{x-z}}\Big\{\alpha(z)(\nu_x-\nu_z)_{V^\bot}^2+\beta(x)\abs{\nu_x+\nu_z}^2 \\
				~& \hspace{80pt}+(\alpha(x)-\alpha(z))\brackets{1+(\nu_x)_{V^\bot}^2}\Big\}\varepsilon^2 \\
				~& + (4M+1)\,\abs{x-z}^{\gamma-2}\varepsilon^2.
\end{split}
\end{multline}
\end{lemma}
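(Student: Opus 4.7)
The plan is to derive the estimate by direct second-order Taylor expansion of each of the three terms constituting $F$. Since $\alpha(z) + \beta(x) + (\alpha(x) - \alpha(z)) = \alpha(x) + \beta(x) = 1$, I would rewrite $F(x,z,\nu_x,\nu_z) - f(x,z)$ as a weighted sum of three increments of the form $f(x+h_x, z+h_z) - f(x,z)$: one with $(h_x, h_z) = (\varepsilon\nu_x, \varepsilon\nu_z)$ at weight $\alpha(z)$, one with $(h_x, h_z) = (\varepsilon\P_{\nu_x}\zeta, \varepsilon\P_{\nu_z}\zeta)$ averaged over $\zeta \in \B^{\e_1}$ at weight $\beta(x)$, and one with $(h_x, h_z) = (\varepsilon\nu_x, \varepsilon\P_{\nu_z}\zeta)$ averaged at weight $\alpha(x) - \alpha(z)$. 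The expansion \eqref{TAYLOR0000} applies to each summand, and since the weights sum to $1$ the error term $(4M+1)|x-z|^{\gamma-2}\varepsilon^2$ appears only once in the combined estimate.

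For the two integrated pieces I would exploit the moment formulas $\dashint_{\B^{\e_1}} \zeta\, d\mathcal{L}^{n-1}(\zeta) = 0$ and $\dashint_{\B^{\e_1}} \zeta \otimes \zeta\, d\mathcal{L}^{n-1}(\zeta) = \frac{1}{n+1}(\I - \e_1 \otimes \e_1)$. The first kills all linear-in-$\zeta$ contributions, so no first-order terms appear from the second piece; the third piece contributes at first order only through its non-averaged $\varepsilon\nu_x$ argument, yielding the $(\nu_x)_V$ and $\langle x+z,\nu_x\rangle$ terms of the stated bound. Furthermore, because $\omega''(|x-z|) < 0$ by \eqref{omega''}, the $\omega''$-contributions from the second and third pieces are multiples of non-negative squared quantities and therefore non-positive after averaging; dropping them leaves only the $\alpha(z)\omega''(\nu_x - \nu_z)_V^2$ term.

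The decisive step is the coupled choice of $\P_{\nu_x}$ and $\P_{\nu_z}$ in the second piece. I would fix any $\P_{\nu_x}$ with first column $\nu_x$ and define $\P_{\nu_z} := R\,\P_{\nu_x}$, where $R$ is the reflection across the hyperplane orthogonal to $\nu_x - \nu_z$, so that $R\nu_x = \nu_z$. Using that $\P_{\nu_x}\zeta \perp \nu_x$ for every $\zeta \in \B^{\e_1}$, together with the identity $|\nu_x - \nu_z|^2|\nu_x + \nu_z|^2 = 4(1 - (\nu_x\cdot\nu_z)^2)$, a direct computation gives
\[
\dashint_{\B^{\e_1}} |(\P_{\nu_x} - \P_{\nu_z})\zeta|^2\, d\mathcal{L}^{n-1}(\zeta) = \frac{|\nu_x + \nu_z|^2}{n+1}.
\]
Combined with the crude bound $(h_x - h_z)_{V^\perp}^2 \leq |h_x - h_z|^2$, this produces the $\beta(x)|\nu_x + \nu_z|^2$ coefficient, with the harmless $1/(n+1)$ absorbed into $C$. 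For the third piece, a similar computation using the same moment formula applied to $|\nu_x - \P_{\nu_z}\zeta|^2$ and $\langle \v, \nu_x - \P_{\nu_z}\zeta\rangle^2$, combined with $(\nu_x)_V^2 + (\nu_x)_{V^\perp}^2 = 1$, yields the $(\alpha(x) - \alpha(z))[1 + (\nu_x)_{V^\perp}^2]$ contribution.

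The main obstacle is pinning down the correct coupling of the matrices. The obvious choice, a rotation sending $\nu_x$ to $\nu_z$, yields a bound proportional to $|\nu_x - \nu_z|^2$, which would be insufficient for the subsequent proof of \eqref{f>midrange F} in the regime where the opponent's near-optimal directions satisfy $\nu_x \approx \nu_z$. The reflection ansatz is precisely what extracts the $|\nu_x + \nu_z|^2$ factor, which is small in the opposite regime $\nu_z \approx -\nu_x$; this is the asymmetric coupling that the game-theoretic argument later requires to close the contradiction.
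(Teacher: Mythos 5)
Your proposal is correct and follows essentially the same route as the paper: decompose $F-f$ into the three weighted increments, apply the Taylor bound \eqref{TAYLOR0000} to each, kill the linear-in-$\zeta$ terms by the vanishing first moment, drop the nonpositive $\omega''$-contributions from the averaged pieces, and use a reflection-type coupling of $\P_{\nu_x}$, $\P_{\nu_z}$ so that the noise mismatch scales with $\abs{\nu_x+\nu_z}$ rather than $\abs{\nu_x-\nu_z}$. Your explicit choice $\P_{\nu_z}=R\,\P_{\nu_x}$ with $R$ the reflection fixing $\{\nu_x-\nu_z\}^\perp$ is an equivalent (and arguably cleaner) description of the paper's Appendix A construction, which enforces $\det\P_{\nu_x}=+1$, $\det\P_{\nu_z}=-1$ and deduces $\abs{\varrho_x-\varrho_z}=\abs{\nu_x+\nu_z}$; and your averaged second-moment bound $\dashint\abs{(\P_{\nu_x}-\P_{\nu_z})\zeta}^2=\tfrac{1}{n+1}\abs{\nu_x+\nu_z}^2$ is sharper than the paper's pointwise bound $\abs{(\P_{\nu_x}-\P_{\nu_z})\zeta}\le\abs{\nu_x+\nu_z}$, hence a fortiori yields \eqref{F-expansion} (the factor $1/(n+1)$ only helps; nothing needs to be absorbed into $C$).
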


\begin{proof}
First, replacing $h_x=\varepsilon\,\nu_x$ and $h_z=\varepsilon\,\nu_z$ in \eqref{TAYLOR0000}, we get the following for the $\alpha(z)$-term in \eqref{F-def},
\begin{multline}\label{[I]}
				f(x+\varepsilon\nu_x,z+\varepsilon\nu_z) - f(x,z)\\
\begin{split}
				\leq ~& C\,\omega'(\abs{x-z})(\nu_x-\nu_z)_V\,\varepsilon + 2M\prodin{x+z}{\nu_x+\nu_z}\,\varepsilon \\
				~& + \frac{C}{2}\,\omega''(\abs{x-z})(\nu_x-\nu_z)_V^2\,\varepsilon^2+\frac{C}{2}\,\frac{\omega'(\abs{x-z})}{\abs{x-z}}(\nu_x-\nu_z)_{V^\bot}^2\,\varepsilon^2 \\
				~& + (4M+1)\,\abs{x-z}^{\gamma-2}\varepsilon^2.
\end{split}
\end{multline}
Similarly, for the $\beta(x)$-term,
\begin{multline*}
				f(x+\varepsilon\,\P_{\nu_x}\zeta,z+\varepsilon\,\P_{\nu_z}\zeta) - f(x,z) \\
\begin{split}
				\leq ~&  C\,\omega'(\abs{x-z})\,(\P_{\nu_x}\zeta-\P_{\nu_z}\zeta)_V\,\varepsilon + 2M\prodin{x+z}{\P_{\nu_x}\zeta+\P_{\nu_z}\zeta}\,\varepsilon \\
				~& + \frac{C}{2}\,\omega''(\abs{x-z})(\P_{\nu_x}\zeta-\P_{\nu_z}\zeta)_V^2\,\varepsilon^2 +\frac{C}{2}\,\frac{\omega'(\abs{x-z})}{\abs{x-z}}(\P_{\nu_x}\zeta-\P_{\nu_z}\zeta)_{V^\bot}^2\,\varepsilon^2 \\
				~& + (4M+1)\,\abs{x-z}^{\gamma-2}\varepsilon^2.
\end{split}
\end{multline*}
Integrating with respect to the $(n-1)$-dimensional Lebesgue measure on $\B^{\e_1}$ the first order terms vanish, while for the second order terms, we use the concavity of $\omega$ to estimate $\omega''\leq 0$. Moreover, since we can choose $\P_{\nu_x}$ and $\P_{\nu_z}$ satisfying
\begin{equation*}
				\abs{\P_{\nu_x}\zeta-\P_{\nu_z}\zeta}\leq\abs{\nu_x+\nu_z}
\end{equation*}
for every $\zeta\in\B^{\e_1}$ (see \Cref{matrices}), we get
\begin{multline}\label{[II]}
				\dashint_{\B^{\e_1}}f(x+\varepsilon\,\P_{\nu_x}\zeta,z+\varepsilon\,\P_{\nu_z}\zeta)\ d\Ln(\zeta) -f(x,z) \\
				\leq \frac{C}{2}\,\frac{\omega'(\abs{x-z})}{\abs{x-z}}\,\abs{\nu_x+\nu_z}^2\,\varepsilon^2 + (4M+1)\,\abs{x-z}^{\gamma-2}\varepsilon^2.
\end{multline}
Finally, for the last term in \eqref{F-def},
\begin{multline*}
				f(x+\varepsilon\,\nu_x,z+\varepsilon\,\P_{\nu_z}\zeta) - f(x,z) \\
\begin{split}
				\leq ~&  C\,\omega'(\abs{x-z})\,(\nu_x-\P_{\nu_z}\zeta)_V\,\varepsilon + 2M\prodin{x+z}{\nu_x+\P_{\nu_z}\zeta}\,\varepsilon \\
				~& + \frac{C}{2}\,\omega''(\abs{x-z})(\nu_x-\P_{\nu_z}\zeta)_V^2\,\varepsilon^2 +\frac{C}{2}\,\frac{\omega'(\abs{x-z})}{\abs{x-z}}(\nu_x-\P_{\nu_z}\zeta)_{V^\bot}^2\,\varepsilon^2 \\
				~& + (4M+1)\,\abs{x-z}^{\gamma-2}\varepsilon^2.
\end{split}
\end{multline*}
Due to symmetry, the first order terms containing $\zeta$ cancel out after integration over $\B^{\e_1}$, while for the second order terms, we use the rough estimate $\omega''\leq 0$. For the remaining term, we develop $(\nu_x-\P_{\nu_z}\zeta)_{V^\bot}^2$ using notation \eqref{hV},
\begin{equation*}
\begin{split}
				(\nu_x-\P_{\nu_z}\zeta)_{V^\bot}^2
				~& =\abs{\nu_x-\P_{\nu_z}\zeta}^2-(\nu_x-\P_{\nu_z}\zeta)_V^2 \\
				~& =\abs{\nu_x}^2-(\nu_x)_V^2+\abs{\P_{\nu_z}\zeta}^2-(\P_{\nu_z}\zeta)_V^2-2\brackets{\prodin{\nu_x}{\P_{\nu_z}\zeta}-(\nu_x)_V(\P_{\nu_z}\zeta)_V} \\
				~& =(\nu_x)_{V^\bot}^2+(\P_{\nu_z}\zeta)_{V^\bot}^2-2\prodin{\nu_x-(\nu_x)_V\v}{\P_{\nu_z}\zeta}.
\end{split}
\end{equation*}
Note that, again by symmetry, the last term vanishes after integration and, since $(\P_{\nu_z}\zeta)_{V^\bot}^2\leq 1$ for any $\abs{\zeta}\leq 1$, we get
\begin{equation*}
				\dashint_{\B^{\e_1}}(\nu_x-\P_{\nu_z}\zeta)_{V^\bot}^2\ d\Ln(\zeta) \leq (\nu_x)_{V^\bot}^2+1.
\end{equation*}
Therefore,
\begin{multline}\label{[III]}
				\dashint_{\B^{\e_1}}f(x+\varepsilon\,\nu_x,z+\varepsilon\,\P_{\nu_z}\zeta)d \Ln(\zeta) - f(x,z) \\
\begin{split}
				\leq ~&  C\,\omega'(\abs{x-z})(\nu_x)_V\,\varepsilon + 2M\prodin{x+z}{\nu_x}\,\varepsilon
				+\frac{C}{2}\,\frac{\omega'(\abs{x-z})}{\abs{x-z}}\brackets{1+(\nu_x)_{V^\bot}^2}\,\varepsilon^2 \\
				~& + (4M+1)\,\abs{x-z}^{\gamma-2}\varepsilon^2.
\end{split}
\end{multline}

Then, replacing each of the terms \eqref{[I]}, \eqref{[II]} and \eqref{[III]} in the formula for $F$ \eqref{F-def}, we get \eqref{F-expansion} and finish the proof.
\end{proof}
\ \\

Now we are in position to explain the game intuition behind our argument of the proof of \Cref{PROPOSITION}. Recall that the crucial point in proving the key \Cref{PROPOSITION} is that, given the choices $\nu_x,\nu_z$ of our opponent, we need to find appropriate vectors $|\widetilde\nu_x|=|\widetilde\nu_z|=1$ so that \eqref{F+F-2f<0} holds. Before moving on to details, we will give intuition for our strategy. We mentioned already in Section \ref{sec:intuition} that the strategy of always pulling the points directly closer to each other does not provide the desired result in general. Hence, our response will depend on the opponents choice. If the opponent chooses to pull the points almost as far from each other that is possible, our response is to pull directly to the opposite direction by choosing $\widetilde\nu_x=-\nu_x$ and $\widetilde\nu_z=-\nu_z$. Otherwise, we just pull the points directly towards each other by choosing $\widetilde\nu_x=-\v$ and $\widetilde\nu_z=\v$, where $\v=\frac{x-z}{|x-z|}$. The way of making the distinction is to consider the projection $(\nu_x-\nu_z)_V$ and fix the threshold $\Theta=\Theta(x,z)$. As we will see 
in \eqref{introcite2}, the particularities of our comparison function $f_1$ make it necessary to require the function $\alpha(\cdot)$ to be  H\"older continuous (with a H\"older exponent $s>0$) and to choose the threshold depending both on the distance of the points as well as the H\"older exponent of $\alpha$, $\Theta(x,z)=\abs{x-z}^{s}\in(0,1]$.

Now we continue with the proof of the key \Cref{PROPOSITION}.


\subsection{Case 1. $(\nu_x-\nu_z)_V^2\geq 4-\Theta$.}\label{Case1}
This is the case where the opponent plays pulling the points almost in the opposite direction. In this case, as a response to the choices of our opponent, we select $\widetilde\nu_x=-\nu_x$ and $\widetilde\nu_z=-\nu_z$. Replacing these in the right hand side of \eqref{F+F-2f<0} and recalling the expansion \eqref{F-expansion}, it turns out that the first order terms cancel out and we get
\begin{multline*}
				F(x,z,\nu_x,\nu_z)+F(x,z,-\nu_x,-\nu_z)-2f(x,z) \\
\begin{split}
				\leq ~& C\,\omega''(\abs{x-z})\,\alpha(z)(\nu_x-\nu_z)_V^2\,\varepsilon^2 \\
				~& +C\,\frac{\omega'(\abs{x-z})}{\abs{x-z}}\Big\{\alpha(z)(\nu_x-\nu_z)_{V^\bot}^2+\beta(x)\abs{\nu_x+\nu_z}^2 \\
				~& \hspace{80pt}+(\alpha(x)-\alpha(z))\brackets{1+(\nu_x)_{V^\bot}^2}\Big\}\varepsilon^2 \\
				~& + 2(4M+1)\,\abs{x-z}^{\gamma-2}\varepsilon^2.
\end{split}
\end{multline*}
Recalling the properties of the function $\omega$, $\frac{1}{2}\leq\omega'\leq 1$ and $\omega''\leq 0$, we obtain
\begin{multline}\label{ineq00}
				F(x,z,\nu_x,\nu_z)+F(x,z,-\nu_x,-\nu_z)-2f(x,z) \\
\begin{split}
				\leq ~& 3\,\alpha_\textrm{min}\,C\,\omega''(\abs{x-z})\,\varepsilon^2 \\
				~& +C\,\frac{1}{\abs{x-z}}\Big\{\alpha(z)(\nu_x-\nu_z)_{V^\bot}^2+\beta(x)\abs{\nu_x+\nu_z}^2 \\
				~& \hspace{60pt}+(\alpha(x)-\alpha(z))\brackets{1+(\nu_x)_{V^\bot}^2}\Big\}\varepsilon^2 \\
				~& + 2(4M+1)\,\abs{x-z}^{\gamma-2}\varepsilon^2,
\end{split}
\end{multline}
where the inequality $(\nu_x-\nu_z)_V^2\geq 4-\Theta\geq 3$ has been used together with $\alpha(z)\geq\alpha_\textrm{min}$. Thus, we need to obtain estimates for $(\nu_x-\nu_z)_{V^\bot}^2$, $\abs{\nu_x+\nu_z}^2$ and $(\nu_x)_{V^\bot}^2$. The first one follows directly from the hypothesis and Pythagorean theorem,
\begin{equation*}
				(\nu_x-\nu_z)_{V^\bot}^2
				= \abs{\nu_x-\nu_z}^2-(\nu_x-\nu_z)_V^2
				\leq 4-(\nu_x-\nu_z)_V^2
				\leq \Theta,
\end{equation*}
while for the second one we recall the parallelogram law,
\begin{equation*}
				\abs{\nu_x+\nu_z}^2=4-\abs{\nu_x-\nu_z}^2\leq 4-(\nu_x-\nu_z)_V^2\leq\Theta.
\end{equation*}
On the other hand,
\begin{equation*}
				(\nu_x)_V^2\geq (1-\sqrt{4-\Theta})^2,
\end{equation*}
and since
\begin{equation}\label{ineq-root}
				\sqrt{4-\Theta}=\sqrt{\pare{2-\frac{\Theta}{4}}^2-\frac{\Theta^2}{16}}\leq 2-\frac{\Theta}{4},
\end{equation}
then
\begin{equation*}
				(1-\sqrt{4-\Theta})^2=5-\Theta-2\sqrt{4-\Theta}\geq 1-\frac{\Theta}{2}.
\end{equation*}
Consequently, we obtain the following estimate for $(\nu_x)_{V^\bot}^2$:
\begin{equation*}
				(\nu_x)_{V^\bot}^2=\abs{\nu_x}^2-(\nu_x)_V^2=1-(\nu_x)_V^2\leq \dfrac{\Theta}{2}.
\end{equation*}
Thus, recalling that $\beta(x)=1-\alpha(x)$ and $\alpha(x)\geq\alpha(z)$,
\begin{multline*}
				\alpha(z)(\nu_x-\nu_z)_{V^\bot}^2+\beta(x)\abs{\nu_x+\nu_z}^2+(\alpha(x)-\alpha(z))\brackets{1+(\nu_x)_{V^\bot}^2} \\
\begin{split}
				~& \leq (\alpha(z)+\beta(x))\Theta + (\alpha(x)-\alpha(z))\pare{1+\frac{\Theta}{2}} \\
				~& = \Theta+(\alpha(x)-\alpha(z))\pare{1-\frac{\Theta}{2}} \\
				~& \leq \Theta+\alpha(x)-\alpha(z).
\end{split}
\end{multline*}
Then, replacing this estimate in \eqref{ineq00}, we get
\begin{multline}\label{introcite}
				F(x,z,\nu_x,\nu_z)+F(x,z,-\nu_x,-\nu_z)-2f(x,z) \\
				\leq \braces{C \brackets{3\,\alpha_\textrm{min}\,\omega''(\abs{x-z}) + \frac{\Theta}{\abs{x-z}}+\frac{\alpha(x)-\alpha(z)}{\abs{x-z}}} + 2(4M+1)\abs{x-z}^{\gamma-2}}\varepsilon^2.
\end{multline}
Then, by inserting \eqref{omega''} with $\gamma=1+s$, using the precise choice of the threshold $\Theta=\abs{x-z}^s$ and the H\"older estimate \eqref{alpha-Holder} for the function $\alpha(\cdot)$, we obtain
\begin{multline}\label{introcite2}
				3\,\alpha_\mathrm{min}\,\omega''(\abs{x-z}) + \frac{\Theta}{\abs{x-z}}+\frac{\alpha(x)-\alpha(z)}{\abs{x-z}} \\
				\leq \pare{-3\,\alpha_\mathrm{min}s(1+s)\omega_0+1+C_\alpha}\abs{x-z}^{s-1}.
\end{multline}
Then, fixing
\begin{equation}\label{omega0}
				\omega_0\geq\frac{C_\alpha+2}{3\,\alpha_\mathrm{min}s(1+s)},
\end{equation}
and replacing these in \eqref{introcite} we get
\begin{equation*}
				F(x,z,\nu_x,\nu_z)+F(x,z,-\nu_x,-\nu_z)-2f(x,z)
				\leq \braces{2(4M+1)-C}\abs{x-z}^{s-1}\,\varepsilon^2.
\end{equation*}
Choosing large enough
\begin{equation}\label{C-case1}
				C>2(4M+1),
\end{equation}
where $M$ is the constant fixed in \eqref{M}, the negativeness of the previous expression is ensured and \eqref{F+F-2f<0} is proven.


\subsection{Case 2. $(\nu_x-\nu_z)_V^2\leq 4-\Theta$.}\label{Case2}
In that case, by \eqref{ineq-root},
\begin{equation}\label{eq-case2}
				(\nu_x-\nu_z)_V \leq 2-\frac{\Theta}{4}.
\end{equation}
As we noted before, this corresponds to the case where the opponent is not playing near optimality. Then, as a response to her choices, 
we choose $\widetilde\nu_x=-\v$ and $\widetilde\nu_z=\v$. Then, replacing in \eqref{F-expansion} and estimating the $\omega''$-term directly by zero,
\begin{equation*}
\begin{split}
				F(x,z,-\v,\v)-f(x,z)
				\leq ~& C\,\omega'(\abs{x-z})\brackets{-2\alpha(z)-(\alpha(x)-\alpha(z))}\,\varepsilon \\
				~& + 2M\prodin{x+z}{-(\alpha(x)-\alpha(z))\v}\,\varepsilon \\
				~& +\frac{C}{2}\,\frac{\omega'(\abs{x-z})}{\abs{x-z}}(\alpha(x)-\alpha(z))\,\varepsilon^2 \\
				~& + (4M+1)\abs{x-z}^{\gamma-2}\varepsilon^2.
\end{split}
\end{equation*}
Using this and \eqref{F-expansion} in \eqref{F+F-2f<0}, together with the rough estimate $\omega''\leq 0$, we have
\begin{multline*}
				F(x,z,\nu_x,\nu_z)+F(x,z,\widetilde\nu_x,\widetilde\nu_z)-2f(x,z) \\
\begin{split}
				\leq ~& C\,\omega'(\abs{x-z})\braces{\alpha(z)\brackets{(\nu_x-\nu_z)_V-2}+(\alpha(x)-\alpha(z))\brackets{(\nu_x)_V-1}}\,\varepsilon \\
				~& + 2M\prodin{x+z}{\alpha(z)(\nu_x+\nu_z)+(\alpha(x)-\alpha(z))(\nu_x-\v)}\,\varepsilon \\
				~& +\frac{C}{2}\,\frac{\omega'(\abs{x-z})}{\abs{x-z}}\Big\{\alpha(z)(\nu_x-\nu_z)_{V^\bot}^2+\beta(x)\abs{\nu_x+\nu_z}^2 \\
				~& \hspace{80pt}+(\alpha(x)-\alpha(z))\brackets{2+(\nu_x)_{V^\bot}^2}\Big\}\,\varepsilon^2 \\
				~& + 2(4M+1)\abs{x-z}^{\gamma-2}\varepsilon^2.
\end{split}
\end{multline*}
Now, recalling that $(\nu_x)_{V^\bot}^2\leq 1$, $(\nu_x)_V\leq 1$, $\abs{x-z}>\frac{N}{10}\varepsilon$ and rearranging terms,
\begin{multline}\label{CASE2}
				F(x,z,\nu_x,\nu_z)+F(x,z,\widetilde\nu_x,\widetilde\nu_z)-2f(x,z) \\
\begin{split}
				\leq ~& 2M\prodin{x+z}{\alpha(z)(\nu_x+\nu_z)+(\alpha(x)-\alpha(z))(\nu_x-\v)}\,\varepsilon \\
				~& + C\,\omega'(\abs{x-z})\bigg\{\alpha(z)\brackets{(\nu_x-\nu_z)_V-2} \\
				~& \hspace{75pt} + \frac{5}{N}\Big[\alpha(z)(\nu_x-\nu_z)_{V^\bot}^2+\beta(x)\abs{\nu_x+\nu_z}^2 \\
				~& \hspace{105pt} +3(\alpha(x)-\alpha(z))\Big]\bigg\}\,\varepsilon \\
				~& + \frac{20}{N}(4M+1)\abs{x-z}^{\gamma-1}\varepsilon.
\end{split}
\end{multline}
Let us estimate the first term in \eqref{CASE2}. We have
\begin{equation*}
				2M\prodin{x+z}{\alpha(z)(\nu_x+\nu_z)+(\alpha(x)-\alpha(z))(\nu_x-\v)}\,\varepsilon \leq 4M\abs{x+z}\,\varepsilon.
\end{equation*}
Now we focus on the quantity $\abs{x+z}$. Since $x$ and $z$ are points in $B_r$ satisfying \eqref{contra2}, then
\begin{equation*}
\begin{split}
				0 ~& < u(x)-u(z)-f(x,z) \\
				~& =u(x)-u(z)-C\omega(\abs{x-z})-M\abs{x+z}^2 \\
				~& \leq u(x)-u(z)-M\abs{x+z}^2,
\end{split}
\end{equation*}
where we have taken into account the explicit form of the function $f$ in this section, \eqref{f} and the fact that $\omega$ is a positive function. We can rearrange terms and take the square root to get
\begin{equation}\label{estimate}
				\abs{x+z} < \frac{1}{\sqrt{M}} \ \brackets{u(x)-u(z)}^{1/2}.
\end{equation}
At this point, we recall a previous local regularity result from \cite{ARR-HEI-PAR} stating that a function $u=u_\varepsilon$ satisfying \eqref{DPP} is asymptotically Hölder continuous for some exponent $\delta\in(0,1)$, that is,
\begin{equation*}
				\abs{u(x)-u(z)}\leq C_u\big(\abs{x-z}^\delta+\varepsilon^\delta\big),
\end{equation*}
for some constant $C_u>0$ depending on $\alpha_\textrm{min}$, $\alpha_\textrm{max}$, $n$, $r$, $\sup_{B_{2r}}u$ and $\delta$. In particular, using the inequality
\begin{equation*}
				a+b< a\pare{1+\frac{b}{2a}}^2 \quad \quad (a,b>0),
\end{equation*}
we obtain
\begin{equation*}
				\abs{u(x)-u(z)}< C_u\abs{x-z}^\delta\brackets{1+\frac{1}{2}\pare{\frac{\varepsilon}{\abs{x-z}}}^\delta}^2.
\end{equation*}
Now, replacing in \eqref{estimate},
\begin{equation*}
\begin{split}
				\abs{x+z} < ~& \sqrt{\frac{C_u}{M}} \ \abs{x-z}^{\delta/2} \ \brackets{1+\frac{1}{2}\pare{\frac{\varepsilon}{\abs{x-z}}}^\delta} \\
				< ~&  \sqrt{\frac{C_u}{M}} \ \abs{x-z}^{\delta/2} \ \brackets{1+\frac{1}{2}\left(\frac{10}{N}\right)^\delta} \\
				< ~&  \frac{3}{2}\,\sqrt{\frac{C_u}{M}} \ \abs{x-z}^{\delta/2},
\end{split}
\end{equation*}
where in the second inequality we have recalled that $\abs{x-z}>\frac{N}{10}\varepsilon$ and the last inequality follows by choosing large enough $N\in\N$ ($N\geq 10$). Thus, the first term in \eqref{CASE2} is bounded by
\begin{equation*}
				 4M\abs{x+z}\,\varepsilon \leq 6\sqrt{MC_u}\ \abs{x-z}^{\delta/2}\,\varepsilon.
\end{equation*}
Then, replacing this and the Hölder regularity estimate for $\alpha$ in \eqref{CASE2} we get
\begin{multline}\label{CASE2b}
				F(x,z,\nu_x,\nu_z)+F(x,z,\widetilde\nu_x,\widetilde\nu_z)-2f(x,z) \\
\begin{split}
				\leq ~& 6\sqrt{MC_u}\ \abs{x-z}^{\delta/2}\,\varepsilon + \frac{20}{N}(4M+1)\abs{x-z}^{\gamma-1}\varepsilon \\
				~& + C\,\omega'(\abs{x-z})\bigg\{\alpha(z)\brackets{(\nu_x-\nu_z)_V-2} \\
				~& \hspace{75pt} + \frac{5}{N}\Big[\alpha(z)(\nu_x-\nu_z)_{V^\bot}^2+\beta(x)\abs{\nu_x+\nu_z}^2 \\
				~& \hspace{105pt} +3C_\alpha\abs{x-z}^s\Big]\bigg\}\,\varepsilon.
\end{split}
\end{multline}
Thus, we need to estimate the terms in braces of the above inequality. One special case happens when $(\nu_x-\nu_z)_{V^\bot}^2\leq \Theta$. Then, the rest of the terms can be easily estimated by using the hypothesis \eqref{eq-case2} and the desired result follows. However, we don't have any control on the size of this term and, for that reason, we need to define a new variable $\vartheta\in\brackets{1,\frac{4}{\Theta}}$ as follows:
\begin{equation}\label{vartheta}
				\vartheta:\,=\vartheta(x,z)=\begin{cases}
				\dfrac{1}{\Theta}(\nu_x-\nu_z)_{V^\bot}^2 & \mbox{ if } (\nu_x-\nu_z)_{V^\bot}^2>\Theta, \\ & \\
				1 & \mbox{ otherwise. }
				\end{cases}
\end{equation}
When $\vartheta>1$, we have
\begin{equation*}
				(\nu_x-\nu_z)_V\leq\sqrt{4-\vartheta\Theta}\leq 2-\frac{\vartheta\Theta}{4}.
\end{equation*}
Note that, by \eqref{eq-case2}, this inequality also holds when $\vartheta=1$. Thus,
\begin{equation}\label{ineq nu-vartheta}
				2-(\nu_x-\nu_z)_V\geq\frac{\vartheta\Theta}{4}.
\end{equation}
Therefore, by \eqref{vartheta} and \eqref{ineq nu-vartheta},
\begin{equation}\label{eq-0001}
				(\nu_x-\nu_z)^2_{V^\bot}\leq\vartheta\Theta\leq 4\brackets{2-(\nu_x-\nu_z)_V}.
\end{equation}
For the second term in brackets, using the parallelogram law we get
\begin{equation}\label{eq-0002}
\begin{split}
				\abs{\nu_x+\nu_z}^2
				~& = 4-\abs{\nu_x-\nu_z}^2 \\
				~& \leq 4-(\nu_x-\nu_z)_V^2 \\
				~& =\brackets{2+(\nu_x-\nu_z)_V}\brackets{2-(\nu_x-\nu_z)_V} \\
				~& < 4\brackets{2-(\nu_x-\nu_z)_V},
\end{split}
\end{equation}
and, since $\Theta=\abs{x-z}^s$ and $\vartheta\geq 1$,
\begin{equation}\label{eq-0003}
				3C_\alpha\abs{x-z}^s\leq 3C_\alpha\vartheta\Theta\leq 4\cdot 3C_\alpha\brackets{2-(\nu_x-\nu_z)_V}.
\end{equation}
Then, combining \eqref{eq-0001}, \eqref{eq-0002} and \eqref{eq-0003}, and since $\alpha(x)\geq\alpha(z)$,
\begin{multline*}
				\alpha(z)(\nu_x-\nu_z)_{V^\bot}^2+\beta(x)\abs{\nu_x+\nu_z}^2 +3C_\alpha\abs{x-z}^s \\
				\leq 4(3C_\alpha+1)\brackets{2-(\nu_x-\nu_z)_V}.
\end{multline*}
Therefore, replacing in \eqref{CASE2b},
\begin{multline*}
				F(x,z,\nu_x,\nu_z)+F(x,z,\widetilde\nu_x,\widetilde\nu_z)-2f(x,z) \\
\begin{split}
				\leq ~& 6\sqrt{MC_u}\ \abs{x-z}^{\delta/2}\,\varepsilon + \frac{20}{N}(4M+1)\abs{x-z}^{\gamma-1}\varepsilon \\
				~& + C\,\omega'(\abs{x-z})\brackets{2-(\nu_x-\nu_z)_V}\braces{-\alpha(z) + \frac{20}{N}(3C_\alpha+1)}\,\varepsilon.
\end{split}
\end{multline*}
Choosing $N\in\N$ such that
\begin{equation}\label{N2}
				N>40\,\frac{3C_\alpha+1}{\alpha_\mathrm{min}},
\end{equation}
and since $\alpha(z)\geq\alpha_\mathrm{min}$, we get
\begin{equation*}
				-\alpha(z) + \frac{20}{N}(3C_\alpha+1) \leq -\frac{\alpha_\mathrm{min}}{2}<0.
\end{equation*}
Finally, recalling \eqref{eq-case2}, $\omega'\geq\frac{1}{2}$, $\Theta=\abs{x-z}^s$ and $s=\gamma-1=\delta/2$, we obtain
\begin{multline*}
				F(x,z,\nu_x,\nu_z)+F(x,z,\widetilde\nu_x,\widetilde\nu_z)-2f(x,z) \\
				\leq \brackets{6\sqrt{MC_u}\  + \frac{\alpha_\mathrm{min}}{2}\cdot\pare{\frac{4M+1}{3C_\alpha+1}-\frac{C}{8}}}\abs{x-z}^s\,\varepsilon.
\end{multline*}
Choosing large enough
\begin{equation}\label{C-case2}
				C>8\pare{\frac{4M+1}{3C_\alpha+1}+\frac{12}{\alpha_\mathrm{min}}\sqrt{MC_u}}
\end{equation}
depending on $M$, $C_\alpha$, $\alpha_\mathrm{min}$ and $C_u$, we ensure that \eqref{F+F-2f<0} holds.


\section{Proof of \Cref{PROPOSITION}. case $\abs{x-z}\leq \frac{N}{10}\varepsilon$}\label{remarks}

In the previous section, we proved \Cref{PROPOSITION} in the case $\abs{x-z}>\frac{N}{10}\varepsilon$. The other case $\abs{x-z}\leq \frac{N}{10}\varepsilon$ is similar to \cite{ARR-HEI-PAR}. In Section \ref{Comp} we briefly commented that in this case we need an annular step function $f_2\lesssim \eps$.
Recalling \eqref{TAYLOR0000} and for large enough
\begin{equation}\label{C-Section4-1}
				C>8Mr+1,
\end{equation}
we obtain the following rough estimate for $f_1$,
\begin{multline}\label{rough}
                f_1(x+h_x,z+h_z)-f_1(x,z) \\
\begin{split}
				~& \leq C\,\omega'(\abs{x-z})(h_x-h_z)_V+2M\prodin{x+z}{h_x+h_z} + \abs{x-z}^{\gamma-1}\,\varepsilon \\
				~& \leq (2C+4M\abs{x+z}+1)\varepsilon \\
				~& < 3C\varepsilon.
\end{split}
\end{multline}
Replacing $f=f_1-f_2$ in \eqref{F-def}, we decompose $F=G_1-G_2$, where
\begin{equation*}
\begin{split}
				G_i(x,z,\nu_x,\nu_z):\,= ~& F(f_i,x,z,\nu_x,\nu_z,\varepsilon) = \alpha(z)f_i(x+\varepsilon\nu_x,z+\varepsilon\nu_z) \\
				~& +\beta(x)\dashint_{\B^{\e_1}}f_i(x+\varepsilon\,\P_{\nu_x}\zeta,z+\varepsilon\,\P_{\nu_z}\zeta)\ d\mathcal{L}^{n-1}(\zeta) \\
				~& +(\alpha(x)-\alpha(z))\dashint_{\B^{\e_1}}f_i(x+\varepsilon\nu_x,z+\varepsilon\,\P_{\nu_z}\zeta)\ d\mathcal{L}^{n-1}(\zeta),
\end{split}
\end{equation*}
for $i=1,2$. Then, by \eqref{rough}, we can estimate
\begin{equation}\label{eq:tilde g}
				\sup_{\nu_x,\nu_z}G_1(x,z,\nu_x,\nu_z)\leq  f_1(x,z)+ 3C\varepsilon.
\end{equation}
Together with $f_2\geq 0$, these estimates yield
\begin{equation}\label{eq:tilde f}
				\sup_{\nu_x,\nu_z}F(x,z,\nu_x,\nu_z)\leq  f_1(x,z)+ 3C\varepsilon.
\end{equation}
Recalling the definition of the step annular function \eqref{f2}, fix $i\in \{0,1,2,\ldots,N\}$ such that $(x,z)\in A_i$ and choose $\abs{\widetilde\nu_x}=\abs{\widetilde\nu_z}=1$ such that $(x+\varepsilon\widetilde\nu_x,z+\varepsilon\widetilde\nu_z)\in A_{i-1}$. Then for $C>1$ large enough such that
\begin{equation}\label{C-Section4-2}
				\alpha_\mathrm{min}C^2-2>7C,
\end{equation}
we can estimate
\begin{equation*}
\begin{split}
				\sup_{\nu_x,\nu_z}G_2(x,z,\nu_x,\nu_z) \geq ~& G_2(x,z,\widetilde\nu_x,\widetilde\nu_z) \\
				\geq ~& \alpha(z)f_2(x+\varepsilon\widetilde\nu_x,z+\varepsilon\widetilde\nu_z) \\
				\geq ~& \alpha_\mathrm{min}f_2(x+\varepsilon\widetilde\nu_x,z+\varepsilon\widetilde\nu_z) \\
				= ~& \alpha_\mathrm{min}C^{2(N-i+1)}\varepsilon \\
				= ~& \alpha_\mathrm{min}C^2C^{2(N-i)}\varepsilon-2C^{2(N-i)}\varepsilon+2f_2(x,z) \\
				= ~& \alpha_\mathrm{min}\pare{C^2-\frac{2}{\alpha_\mathrm{min}}}C^{2(N-i)}\varepsilon+2f_2(x,z) \\
				> ~& 7C\varepsilon+2f_2(x,z),
\end{split}
\end{equation*}
where we use $f_2\geq 0$ in the second inequality and $\alpha_{\text{min}}>0$ in the last inequality. Therefore, by $f=f_1-f_2$ and \eqref{eq:tilde g} it holds
\begin{equation*}
\begin{split}
				\inf_{\nu_x,\nu_z}F(x,z,\nu_x,\nu_z) \leq  ~& \sup_{\nu_x,\nu_z} G_1(x,z,\nu_x,\nu_z)-\sup_{\nu_x,\nu_z} G_2(x,z,\nu_x,\nu_z)\\
				\leq ~& f_1(x,z)-2f_2(x,z)-4C\varepsilon.
\end{split}
\end{equation*}
Combining this inequality with \eqref{eq:tilde f}, we get
\begin{equation*}
				\sup_{\nu_x,\nu_z}F(x,z,\nu_x,\nu_z)+\inf_{\nu_x,\nu_z}F(x,z,\nu_x,\nu_z) < 2f(x,z)-C\varepsilon.
\end{equation*}
Letting large enough $C$, we get \eqref{F+F-2f<0}, and this proves \Cref{PROPOSITION} in the case $\abs{x-z} \leq \frac{N}{10}\varepsilon$.


\section{An alternative formulation in the case $2<p(x)<\infty$}\label{2<p<infty}

As we noted at the beginning of this work, the authors in \cite{ARR-HEI-PAR} showed that the solutions $u_\varepsilon$ of the DPP \eqref{DPP} converge uniformly as $\varepsilon\to 0$ to a viscosity solution of the normalized $p(x)$-Laplace equation
\begin{equation*}
				\Delta^N_{p(x)}\, u(x)=\Delta u(x)+(p(x)-2)\Delta^N_\infty\, u(x)=0,
\end{equation*}
provided that $p:\Omega\to(1,\infty]$ is a continuous function. In this section we consider a different DPP whose solutions are asymptotically related in the same way to the normalized $p(x)$-Laplace equation when $p(x)>2$ for all $x\in\Omega$. Given $\Omega\subset\R^n$ a bounded domain and small enough $\varepsilon>0$, let $u=u_\varepsilon:\Omega\rightarrow\R$ be a function satisfying the DPP 
\begin{equation}\label{Classic-DPP}
				u(x) = \alpha(x)\braces{\frac{1}{2}\sup_{B_\varepsilon(x)}u + \frac{1}{2}\inf_{B_\varepsilon(x)}u} +\beta(x)\dashint_{B_\varepsilon(x)} \hspace{-5pt} u
\end{equation}
for $x\in\Omega$, where $\alpha:\Omega\to(0,1]$ and $\beta:\Omega\to[0,1)$ are continuous probability functions depending on $p$ and defined as follows:
\begin{equation*}
				\alpha(x):\,=\frac{p(x)-2}{n+p(x)} \quad \quad \mbox{ and } \quad \quad \beta(x):\,=\frac{n+2}{n+p(x)}.
\end{equation*}

As it happens with \eqref{DPP}, the DPP \eqref{Classic-DPP} is related to a slightly different tug-of-war game, compared to the DPP \eqref{DPP}. Indeed, the main difference between this game and the previous one is that, in this case, the random noise can displace the token to any point in the $n$-dimensional ball $B_\varepsilon(x)$, instead of moving it to a random point in the orthogonal $(n-1)$-dimensional ball $B_\varepsilon^{\nu}(x)$, where $\nu$ is the direction chosen by the winner of the toss. That is, the possible random displacement of the token in a single step is not affected by the choices of the players. For more details, see \cite{manfredipr12} where this game is described for fixed $\alpha$ and $\beta$.
\\

In a previous result (see \cite[Section 5]{luirop}), it was shown that, for given bounded domain $\Omega\subset\R^n$ and $B_{2r}(x_0)\subset\Omega$, a solution $u=u_\varepsilon$ of \eqref{Classic-DPP} satisfies
\begin{equation}\label{Asymptotic-Holder-Regularity-LP}
				\abs{u(x)-u(z)}\leq C_u\pare{\abs{x-z}^\delta+\varepsilon^\delta} \quad\quad \mbox{ where }\ x,z\in B_r(x_0),
\end{equation}
for some exponent $\delta\in(0,1)$.
\\

As in the case studied in previous sections, provided that the function $p$ is Hölder continuous, that is,
\begin{equation*}
				\abs{p(x)-p(z)}\leq C_p\abs{x-z}^s,
\end{equation*}
for every $x,y\in\Omega$ and some $C_p>0$ and $s\in(0,1)$, the asymptotic estimate \eqref{Asymptotic-Holder-Regularity-LP} can be shown with $\delta=1$.

\begin{theorem}\label{MAIN-THM-Classic}
Let $\Omega\subset\R^n$ be a bounded domain and $B_{2r}(x_0)\subset\Omega$ for some $r>0$. Then, for a solution $u=u_\varepsilon$ of \eqref{Classic-DPP} it holds
\begin{equation*}
				\abs{u(x)-u(z)}\leq C\pare{\abs{x-z}+\varepsilon} \quad\quad \mbox{ when }\ x,z\in B_r(x_0),
\end{equation*}
for some constant $C>0$ depending on $p_\textrm{min}$, $C_p$, $n$, $r$ and $\sup_{B_{2r}}u$.
\end{theorem}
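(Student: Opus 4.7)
\medskip

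\noindent\textbf{Proof plan for Theorem \ref{MAIN-THM-Classic}.}
The plan is to mimic the scheme used for \Cref{MAIN-THM} (the construction of a comparison function in $\R^{2n}$ together with the counter-assumption argument of \Cref{LEMMA-1} and \Cref{PROPOSITION}), while taking advantage of a crucial simplification: in \eqref{Classic-DPP} the random noise $\dashint_{B_\varepsilon(x)}u$ does not depend on the direction $\nu$ chosen by the winner of the coin toss. Therefore the natural coupling in the $\beta$-term is the \emph{synchronous} one, $\xi_x=x+\varepsilon\zeta$, $\xi_z=z+\varepsilon\zeta$ with $\zeta\in B_1(0)$, which automatically eliminates the small-turn phenomenon that forced the delicate choice of matrices $\P_{\nu_x},\P_{\nu_z}$ in \Cref{LEMMA-F}.

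First I would keep the same comparison function $f=f_1-f_2$ from \Cref{Comp} (with $\omega$, $M$, $N$, $f_2$ defined exactly as there, and with $\alpha(x)=(p(x)-2)/(n+p(x))$, which is H\"older continuous of exponent $s$ with constant controlled by $C_p$, $p_\textrm{min}$ and $n$). Then I would derive the analog of \Cref{LEMMA-1}: writing
\begin{equation*}
\A_\varepsilon u(x,\nu)\,=\,\alpha(x)u(x+\varepsilon\nu)+\beta(x)\dashint_{B_\varepsilon(x)}\!u
\end{equation*}
and reproducing the add-and-subtract trick with the synchronous coupling yields, for $x,z\in B_r$ near-maximizers of $u(x)-u(z)-f(x,z)$,
\begin{equation*}
f(x,z)\,\leq\,\midr_{|\nu_x|=|\nu_z|=1}\widetilde F(x,z,\nu_x,\nu_z)+2\eta,
\end{equation*}
where
\begin{equation*}
\begin{split}
\widetilde F(x,z,\nu_x,\nu_z)\,:=\,&\alpha(z)\,f(x+\varepsilon\nu_x,z+\varepsilon\nu_z)+\beta(x)\dashint_{B_1(0)}\!f(x+\varepsilon\zeta,z+\varepsilon\zeta)\,d\zeta\\
&+(\alpha(x)-\alpha(z))\dashint_{B_1(0)}\!f(x+\varepsilon\nu_x,z+\varepsilon\zeta)\,d\zeta.
\end{split}
\end{equation*}

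The key step is then the analog of \Cref{PROPOSITION}: show the strict reverse inequality and contradict the counter-assumption. I would Taylor expand each of the three pieces of $\widetilde F$ as in \eqref{TAYLOR0000}. The decisive observation is that in the $\beta(x)$-term one has $h_x-h_z=\varepsilon\zeta-\varepsilon\zeta=0$, so \emph{all} the $\omega'$- and $\omega''$-contributions vanish and after integration only the harmless piece $M|h_x+h_z|^2=4M\varepsilon^2|\zeta|^2$ remains; in particular, the problematic term $\tfrac{C}{2}\tfrac{\omega'(|x-z|)}{|x-z|}|\nu_x+\nu_z|^2\varepsilon^2$ from \eqref{F-expansion} is absent. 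Consequently the analog of \eqref{F-expansion} is cleaner and one runs the same threshold-angle dichotomy with $\Theta=|x-z|^s$: in Case 1 ($(\nu_x-\nu_z)_V^2\geq 4-\Theta$) respond with $(\widetilde\nu_x,\widetilde\nu_z)=(-\nu_x,-\nu_z)$ so that first order terms cancel and the decisive estimate
$3\alpha_\textrm{min}\omega''(|x-z|)+\tfrac{\Theta}{|x-z|}+\tfrac{\alpha(x)-\alpha(z)}{|x-z|}\leq 0$ from \eqref{introcite2} goes through, choosing $\omega_0$ as in \eqref{omega0}; in Case 2 ($(\nu_x-\nu_z)_V^2\leq 4-\Theta$) respond with pulling towards each other, $(\widetilde\nu_x,\widetilde\nu_z)=(-\v,\v)$, and use the asymptotic H\"older bound \eqref{Asymptotic-Holder-Regularity-LP} to dominate $|x+z|$ via \eqref{estimate}. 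The short-range case $|x-z|\leq \tfrac{N}{10}\varepsilon$ is handled verbatim as in \Cref{remarks}, since $f_2$ is defined independently of the DPP and the same rough bound \eqref{rough} survives (now with no $\omega'/|x-z|$ coming from the noise).

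The main technical obstacle I anticipate is not in Case 1 or Case 2 themselves (they are in fact easier here) but in bookkeeping the constants $C$, $N$, $\omega_0$ and $M$ so that all of \eqref{C0}, \eqref{N1}, \eqref{omega0}, \eqref{N2}, \eqref{C-case1}, \eqref{C-case2} and the analogs of \eqref{C-Section4-1}, \eqref{C-Section4-2} can be satisfied simultaneously; since all of them only depend on $p_\textrm{min}$, $C_p$, $n$, $r$ and $\sup_{B_{2r}}|u|$, this presents no genuine difficulty but must be checked carefully. Once these choices are fixed, the contradiction between \Cref{LEMMA-1} and the key lemma gives \eqref{aim} in $B_r$, hence the Lipschitz estimate of \Cref{MAIN-THM-Classic}.
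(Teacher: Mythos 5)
Your proposal is correct and follows essentially the same route as the paper's Section 5: the same comparison function $f=f_1-f_2$, the same synchronous coupling $\zeta\mapsto(x+\varepsilon\zeta,z+\varepsilon\zeta)$ in the $\beta$-term, the same resulting function $F$ as in \eqref{Classic-F-def}, and the same reduction to the Case 1/Case 2 dichotomy of \Cref{Section2} and the short-range argument of \Cref{remarks}. The paper packages the final step slightly differently (it bounds the cleaner expansion \eqref{Classic-F-expansion} from above by re-inserting the nonnegative term $\beta(x)\abs{h_x+h_z}^2$ so as to cite \eqref{F-expansion} verbatim, rather than rerunning the dichotomy on a strictly smaller expression), but this is a cosmetic difference and your version is equivalent.
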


We show that the asymptotic regularity result for solutions $u$ of \eqref{Classic-DPP} stated in the previous theorem can be directly derived from the arguments in \Cref{Section2,remarks}. Let us rewrite \eqref{Classic-DPP} using the midrange notation introduced at the beginning of this article. Since the $\beta(x)$-term of the DPP does not depend on any parameter, \eqref{Classic-DPP} can be written as
\begin{equation*}
				u(x) = \midr_{h\in\B}\A_\varepsilon u(x,h),
\end{equation*}
where $\B=B_1(0)$ stands for the unitary ball centered at the origin and 
\begin{equation*}
				\A_\varepsilon u(x,h)=\alpha(x)u(x+\varepsilon h)+\beta(x)\dashint_\B u(x+\varepsilon\,\zeta)\ d\zeta,
\end{equation*}
which is a similar version of \eqref{DPP2} and \eqref{Au2}, respectively. Thus, given $x,z\in B_r$ and $h_x,h_z\in\B$ and assuming without any loss of generality that $\alpha(x)\geq\alpha(z)$, we analogously get
\begin{equation*}
\begin{split}
				\A_\varepsilon u(x,h_x) - \A_\varepsilon u(z,h_z)
				= ~& \alpha(z)\brackets{u(x+\varepsilon h_x)-u(z+\varepsilon h_z)} \\
				~& +\beta(x)\dashint_\B\brackets{u(x+\varepsilon\,\zeta)-u(z+\varepsilon\,\zeta)}\ d\zeta \\
				~& +(\alpha(x)-\alpha(z))\dashint_\B\brackets{u(x+\varepsilon h_x)-u(z+\varepsilon\,\zeta)}\ d\zeta.
\end{split}
\end{equation*}
Proceeding by contradiction in the same way as in \Cref{background} (see \Cref{LEMMA-1}), we will end up defining a function $F$ as follows,
\begin{equation}\label{Classic-F-def} 
\begin{split}
				F(x,z,h_x,h_z):\,= ~& F(f,x,z,h_x,h_z,\varepsilon):\,= \alpha(z)f(x+\varepsilon h_x,z+\varepsilon h_z) \\
				~& +\beta(x)\dashint_\B f(x+\varepsilon\,\zeta,z+\varepsilon\,\zeta)\ d\zeta \\
				~& +(\alpha(x)-\alpha(z))\dashint_\B f(x+\varepsilon h_x,z+\varepsilon\,\zeta)\ d\zeta,
\end{split}
\end{equation}
for $h_x,h_z\in\B$, and we show the following expansion for $F$:

\begin{lemma}\label{LEMMA-F-Classic}
Let $h_x,h_z\in\B$. Then, for $\abs{x-z}>>\varepsilon$, the function $F$ defined in \eqref{Classic-F-def} satisfies
\begin{multline}\label{Classic-F-expansion}
				F(x,z,h_x,h_z)-f(x,z) \\
\begin{split}
				\leq ~& C\,\omega'(\abs{x-z})\brackets{\alpha(z)(h_x-h_z)_V+(\alpha(x)-\alpha(z))(h_x)_V}\,\varepsilon \\
				~& + 2M\prodin{x+z}{\alpha(z)(h_x+h_z)+(\alpha(x)-\alpha(z))h_x}\,\varepsilon \\
				~& + \frac{C}{2}\,\omega''(\abs{x-z})\,\alpha(z)(h_x-h_z)_V^2\,\varepsilon^2 \\
				~& +\frac{C}{2}\,\frac{\omega'(\abs{x-z})}{\abs{x-z}}\Big\{\alpha(z)(h_x-h_z)_{V^\bot}^2+(\alpha(x)-\alpha(z))\brackets{1+(h_x)_{V^\bot}^2}\Big\}\varepsilon^2 \\
				~& + (4M+1)\,\abs{x-z}^{\gamma-2}\varepsilon^2.
\end{split}
\end{multline}
\end{lemma}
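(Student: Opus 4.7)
The plan is to mimic the proof of \Cref{LEMMA-F} by applying the Taylor expansion \eqref{TAYLOR0000} separately to each of the three integrands appearing in the definition \eqref{Classic-F-def} of $F$, and then to combine the three pieces using $\alpha(z) + \beta(x) + (\alpha(x) - \alpha(z)) = \alpha(x) + \beta(x) = 1$ to collect all the remainder terms into the single error $(4M+1)|x-z|^{\gamma-2}\varepsilon^2$.

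For the $\alpha(z)$-term the estimate is word-for-word \eqref{[I]}: the substitution $h_x \mapsto \varepsilon h_x$, $h_z \mapsto \varepsilon h_z$ in \eqref{TAYLOR0000} yields exactly the contribution that appears in \eqref{Classic-F-expansion} weighted by $\alpha(z)$. The $\beta(x)$-term is where the alternative DPP becomes substantially simpler than in \Cref{LEMMA-F}: since the same random displacement $\zeta \in \B$ is inserted in both slots, the Taylor expansion of $f(x+\varepsilon\zeta, z+\varepsilon\zeta)$ about $(x,z)$ has $h_x - h_z = \zeta - \zeta = 0$, so every first- and second-order term depending on this difference identically vanishes. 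The only surviving first-order piece, $2M\langle x+z, 2\zeta\rangle\varepsilon$, integrates to zero over $\B$ by symmetry in $\zeta$, and we are left only with the error $(4M+1)|x-z|^{\gamma-2}\varepsilon^2$. In particular, neither a coupling between orthogonal matrices $\P_{\nu_x}, \P_{\nu_z}$ nor an $|h_x+h_z|^2$-term (which in \eqref{F-expansion} arose from $\beta(x)|\nu_x+\nu_z|^2$) appears in \eqref{Classic-F-expansion}.

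For the $(\alpha(x) - \alpha(z))$-term, \eqref{TAYLOR0000} is applied with displacements $\varepsilon h_x$ and $\varepsilon\zeta$. The first-order terms containing $\zeta$ integrate to zero by symmetry on $\B$, leaving exactly $C\omega'(|x-z|)(h_x)_V\varepsilon$ and $2M\langle x+z, h_x\rangle\varepsilon$. For the second order, concavity $\omega'' \leq 0$ allows us to discard the $\omega''$-contribution, while for the $\omega'/|x-z|$-contribution we expand
\[
(h_x-\zeta)_{V^\bot}^2 = (h_x)_{V^\bot}^2 + (\zeta)_{V^\bot}^2 - 2\langle h_x - (h_x)_V \v, \zeta\rangle,
\]
observe that the last term integrates to zero by oddness in $\zeta$, and use $(\zeta)_{V^\bot}^2 \leq |\zeta|^2 \leq 1$ to conclude $\dashint_{\B}(h_x-\zeta)_{V^\bot}^2\,d\zeta \leq 1 + (h_x)_{V^\bot}^2$. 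Summing the three contributions weighted by $\alpha(z)$, $\beta(x)$, and $\alpha(x)-\alpha(z)$ and collecting the remainders yields \eqref{Classic-F-expansion}. I do not expect any substantive obstacle: the computation is strictly a simplification of that in \Cref{LEMMA-F}, since the identity coupling on the full $n$-dimensional ball $\B$ obviates the need for the orthogonal-matrix construction of \Cref{matrices}.
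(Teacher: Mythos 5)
Your proposal is correct and coincides essentially verbatim with the paper's own proof of \Cref{LEMMA-F-Classic}: the same term-by-term application of \eqref{TAYLOR0000} to the three integrands of \eqref{Classic-F-def}, the same observation that the $\beta(x)$-term trivializes because $h_x-h_z=\zeta-\zeta=0$, the same expansion of $(h_x-\zeta)_{V^\bot}^2$ with the oddness cancellation, and the same collection of the $(4M+1)\abs{x-z}^{\gamma-2}\varepsilon^2$ remainders via $\alpha(z)+\beta(x)+(\alpha(x)-\alpha(z))=1$.
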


\begin{proof}
The $\alpha(z)$-term in \eqref{Classic-F-def} follows directly from \eqref{TAYLOR0000},
\begin{multline}\label{Classic-[I]}
				f(x+\varepsilon h_x,z+\varepsilon h_z) - f(x,z)\\
\begin{split}
				\leq ~& C\,\omega'(\abs{x-z})(h_x-h_z)_V\,\varepsilon + 2M\prodin{x+z}{h_x+h_z}\,\varepsilon \\
				~& + \frac{C}{2}\,\omega''(\abs{x-z})(h_x-h_z)_V^2\,\varepsilon^2+\frac{C}{2}\,\frac{\omega'(\abs{x-z})}{\abs{x-z}}(h_x-h_z)_{V^\bot}^2\,\varepsilon^2 \\
				~& + (4M+1)\,\abs{x-z}^{\gamma-2}\varepsilon^2.
\end{split}
\end{multline}
For the $\beta(x)$-term,
\begin{equation*}
				f(x+\varepsilon\,\zeta,z+\varepsilon\,\zeta) - f(x,z) \leq  4M\prodin{x+z}{\zeta}\,\varepsilon + (4M+1)\,\abs{x-z}^{\gamma-2}\varepsilon^2.
\end{equation*}
Integrating over $\B$ the first order term vanishes, then,
\begin{equation}\label{Classic-[II]}
				\dashint_\B f(x+\varepsilon\,\zeta,z+\varepsilon\,\zeta)\ d\zeta -f(x,z) \leq (4M+1)\,\abs{x-z}^{\gamma-2}\varepsilon^2.
\end{equation}
Finally, for the last term in \eqref{Classic-F-def},
\begin{multline*}
				f(x+\varepsilon h_x,z+\varepsilon\,\zeta) - f(x,z) \\
\begin{split}
				\leq ~&  C\,\omega'(\abs{x-z})\,(h_x-\zeta)_V\,\varepsilon + 2M\prodin{x+z}{h_x+\zeta}\,\varepsilon \\
				~& + \frac{C}{2}\,\omega''(\abs{x-z})(h_x-\zeta)_V^2\,\varepsilon^2 +\frac{C}{2}\,\frac{\omega'(\abs{x-z})}{\abs{x-z}}(h_x-\zeta)_{V^\bot}^2\,\varepsilon^2 \\
				~& + (4M+1)\,\abs{x-z}^{\gamma-2}\varepsilon^2.
\end{split}
\end{multline*}
Due to symmetry, the first order terms containing $\zeta$ cancel out after integration over $\B$, while for the second order terms, we use the rough estimate $\omega''\leq 0$. For the remaining term, we develop $(h_x-\zeta)_{V^\bot}^2$ using notation \eqref{hV},
\begin{equation*}
\begin{split}
				(h_x-\zeta)_{V^\bot}^2
				~& =\abs{h_x-\zeta}^2-(h_x-\zeta)_V^2 \\
				~& =\abs{h_x}^2-(h_x)_V^2+\abs{\zeta}^2-\zeta_V^2-2\brackets{\prodin{h_x}{\zeta}-(h_x)_V\zeta_V} \\
				~& =(h_x)_{V^\bot}^2+\zeta_{V^\bot}^2-2\prodin{h_x-(h_x)_V\v}{\zeta}.
\end{split}
\end{equation*}
Note that, again by symmetry, the last term vanishes after integration and, since $\zeta_{V^\bot}^2\leq 1$ for any $\abs{\zeta}\leq 1$, we get
\begin{equation*}
				\dashint_{\B}(h_x-\zeta)_{V^\bot}^2\ d\zeta \leq (h_x)_{V^\bot}^2+1.
\end{equation*}
Therefore,
\begin{multline}\label{Classic-[III]}
				\dashint_{\B}f(x+\varepsilon\,h_x,z+\varepsilon\,\zeta)d \Ln(\zeta) - f(x,z) \\
\begin{split}
				\leq ~&  C\,\omega'(\abs{x-z})(h_x)_V\,\varepsilon + 2M\prodin{x+z}{h_x}\,\varepsilon
				+\frac{C}{2}\,\frac{\omega'(\abs{x-z})}{\abs{x-z}}\brackets{1+(h_x)_{V^\bot}^2}\,\varepsilon^2 \\
				~& + (4M+1)\,\abs{x-z}^{\gamma-2}\varepsilon^2.
\end{split}
\end{multline}

Then, replacing \eqref{Classic-[I]}, \eqref{Classic-[II]} and \eqref{Classic-[III]} in \eqref{Classic-F-def} we get \eqref{Classic-F-expansion}.
\end{proof}

Note that \Cref{LEMMA-F-Classic} is the analogous version of \Cref{LEMMA-F} in the case $1<p(x)\leq\infty$. Then, the next step is to show the key \Cref{PROPOSITION} for the function $F$ defined in \eqref{Classic-F-def}. In fact, since $\beta(x)\abs{h_x+h_z}^2\geq 0$, the expansion for $F$, \eqref{Classic-F-expansion}, is smaller than
\begin{equation*}
\begin{split}
				F(x,z,h_x,h_z)-f(x,z)
				\leq ~& C\,\omega'(\abs{x-z})\brackets{\alpha(z)(h_x-h_z)_V+(\alpha(x)-\alpha(z))(h_x)_V}\,\varepsilon \\
				~& + 2M\prodin{x+z}{\alpha(z)(h_x+h_z)+(\alpha(x)-\alpha(z))h_x}\,\varepsilon \\
				~& + \frac{C}{2}\,\omega''(\abs{x-z})\,\alpha(z)(h_x-h_z)_V^2\,\varepsilon^2 \\
				~& +\frac{C}{2}\,\frac{\omega'(\abs{x-z})}{\abs{x-z}}\Big\{\alpha(z)(h_x-h_z)_{V^\bot}^2+\beta(x)\abs{h_x+h_z}^2 \\
				~& \hspace{80pt}+(\alpha(x)-\alpha(z))\brackets{1+(h_x)_{V^\bot}^2}\Big\}\varepsilon^2 \\
				~& + (4M+1)\,\abs{x-z}^{\gamma-2}\varepsilon^2,
\end{split}
\end{equation*}
which contains exactly the same terms as in \eqref{F-expansion}, its analogous in \Cref{Section2}. Thus, proceeding exactly as in \Cref{Section2}, we prove the key lemma in the case $\abs{x-z}>\frac{N}{10}\varepsilon$. Finally, repeating the same argument from \Cref{remarks}, we show the key lemma in the case $\abs{x-z}\leq\frac{N}{10}\varepsilon$, and thus we conclude the proof of \Cref{MAIN-THM-Classic}.

Observe that the above proof can be modified to have stability when $p(x)$ is close to $2$. To this end we should use a mirror point coupling for the noise term, as it is done in \cite{luirop} in the case of the Hölder regularity. However, for consistency with the previous sections, we have made this expository choice here.


\appendix
\section{Orthogonal transformations}\label{Appendix A}

\begin{lemma}\label{matrices}
Let $\abs{\nu_x}=\abs{\nu_z}=1$. There exist $\P_{\nu_x},\P_{\nu_z}\in O(n)$ such that $\P_{\nu_x}\e_1=\nu_x$, $\P_{\nu_z}\e_1=\nu_z$ and 
\begin{equation*}
				\abs{\P_{\nu_x}\zeta-\P_{\nu_z}\zeta}\leq\abs{\nu_x+\nu_z}
\end{equation*}
for every $\zeta\in\B^{\e_1}$.
\end{lemma}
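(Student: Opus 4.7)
The plan is to construct $\P_{\nu_x}$ and $\P_{\nu_z}$ so that they differ by a single reflection that swaps $\nu_z$ and $\nu_x$. Note first that $\P_{\nu}\zeta$ depends only on the last $n-1$ columns of $\P_\nu$ whenever $\zeta_1=0$, so we have full freedom to couple the two matrices after fixing their first columns. If $\nu_x=\nu_z$, take $\P_{\nu_x}=\P_{\nu_z}$ and the inequality is trivial. Otherwise set
\begin{equation*}
u=\frac{\nu_x-\nu_z}{|\nu_x-\nu_z|},\qquad R:=\I-2\,u\otimes u,
\end{equation*}
the reflection across the hyperplane $u^\bot$. A direct calculation using $|\nu_x|=|\nu_z|=1$ shows $R\nu_z=\nu_x$. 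Pick any $\P_{\nu_z}\in O(n)$ with $\P_{\nu_z}\e_1=\nu_z$, and define $\P_{\nu_x}:=R\,\P_{\nu_z}$. Since $R\in O(n)$, $\P_{\nu_x}\in O(n)$ and $\P_{\nu_x}\e_1=R\nu_z=\nu_x$, as required.

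For the estimate, fix $\zeta\in\B^{\e_1}$ and set $w:=\P_{\nu_z}\zeta$. Because $\zeta\perp\e_1$ and $\P_{\nu_z}\e_1=\nu_z$, the vector $w$ is orthogonal to $\nu_z$, and $|w|=|\zeta|\leq 1$. Then
\begin{equation*}
\P_{\nu_x}\zeta-\P_{\nu_z}\zeta=(R-\I)w=-2(u\cdot w)\,u,
\end{equation*}
so $|\P_{\nu_x}\zeta-\P_{\nu_z}\zeta|=2|u\cdot w|$. Using $w\perp\nu_z$ and writing $\nu_x=(\nu_x\cdot\nu_z)\nu_z+w^\bot$ with $w^\bot\perp\nu_z$ and $|w^\bot|=\sqrt{1-(\nu_x\cdot\nu_z)^2}$, we get
\begin{equation*}
|u\cdot w|=\frac{|\nu_x\cdot w|}{|\nu_x-\nu_z|}=\frac{|w^\bot\cdot w|}{|\nu_x-\nu_z|}\leq\frac{\sqrt{1-(\nu_x\cdot\nu_z)^2}}{|\nu_x-\nu_z|}\,|w|.
\end{equation*}
The algebraic identity $|\nu_x-\nu_z|^2|\nu_x+\nu_z|^2=(2-2\nu_x\cdot\nu_z)(2+2\nu_x\cdot\nu_z)=4(1-(\nu_x\cdot\nu_z)^2)$ then yields $2\sqrt{1-(\nu_x\cdot\nu_z)^2}/|\nu_x-\nu_z|=|\nu_x+\nu_z|$, and therefore
\begin{equation*}
|\P_{\nu_x}\zeta-\P_{\nu_z}\zeta|\leq|\nu_x+\nu_z|\,|w|\leq|\nu_x+\nu_z|.
\end{equation*}

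The only conceptual subtlety is choosing the right coupling. The naive candidate, namely the rotation through angle $\theta$ in the plane $\mathrm{span}(\nu_x,\nu_z)$ that sends $\nu_z$ to $\nu_x$, produces a displacement of order $2\sin(\theta/2)$ on $\nu_z^\bot$, which exceeds the target $|\nu_x+\nu_z|=2\cos(\theta/2)$ whenever $\theta>\pi/2$. The reflection across $u^\bot$, on the other hand, fixes the $(n-2)$-dimensional subspace $\{\nu_x,\nu_z\}^\bot$ and acts non-trivially only in the single direction within $\nu_z^\bot\cap\mathrm{span}(\nu_x,\nu_z)$, producing precisely the factor $|\nu_x+\nu_z|$ that the statement demands.
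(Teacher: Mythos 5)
Your proof is correct. The coupling you construct is in fact the \emph{same} one the paper uses -- in both cases $\P_{\nu_x}$ and $\P_{\nu_z}$ differ by the unique reflection that fixes $\{\nu_x,\nu_z\}^\bot$ pointwise and swaps $\nu_z\leftrightarrow\nu_x$ in the plane $\mathrm{span}(\nu_x,\nu_z)$ -- but your route to it is cleaner. The paper builds both orthogonal matrices explicitly: it chooses a common orthonormal basis $\RR$ of an $(n-2)$-dimensional subspace of $\{\nu_x,\nu_z\}^\bot$, picks second columns $\varrho_x,\varrho_z$ with the determinant sign convention $\det\P_{\nu_x}=1$, $\det\P_{\nu_z}=-1$, and then argues that the block $\Q$ of $\P_{\nu_x}^\top\P_{\nu_z}$ is a $2\times 2$ reflection, from which $\langle\varrho_x,\varrho_z\rangle=-\langle\nu_x,\nu_z\rangle$ and hence $\abs{\varrho_x-\varrho_z}=\abs{\nu_x+\nu_z}$. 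You instead take an arbitrary $\P_{\nu_z}$ and post-compose with the Householder reflection $R=\I-2u\otimes u$, $u=(\nu_x-\nu_z)/\abs{\nu_x-\nu_z}$, which automatically lands $\e_1$ on $\nu_x$ without any basis bookkeeping, and then the bound follows in one line from Cauchy--Schwarz and the identity $\abs{\nu_x-\nu_z}^2\abs{\nu_x+\nu_z}^2=4\bigl(1-(\nu_x\cdot\nu_z)^2\bigr)$. Your closing remark about why the rotation coupling fails for $\theta>\pi/2$ is exactly the right reason this reflection is forced. Two small presentational points: the symbol $w^\bot$ is unfortunate since $w$ is already in play (something like $\nu_x^\perp$ for the component of $\nu_x$ orthogonal to $\nu_z$ would be safer), and it is worth noting that your construction works uniformly, including the degenerate case $\nu_x=-\nu_z$, where $R=\I-2\nu_x\otimes\nu_x$ and the inequality is the trivial $0\le 0$.
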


\begin{proof}
In order to show this result, we construct explicit orthogonal matrices satisfying the required conditions. For each fixed 
$\abs{\nu_x}=\abs{\nu_z}=1$, we choose $\P_{\nu_x}$ and $\P_{\nu_z}$ in $O(n)$ as follows. First, we denote by $\braces{\nu_x}^\bot$, $\braces{\nu_z}^\bot$ and $\braces{\nu_x,\nu_z}^\bot$ the vector spaces
\begin{equation*}
\begin{split}
                &\braces{\nu_x}^\bot:\,=\set{\xi\in\R^n}{\prodin{\nu_x}{\xi}=0},\\
                &\braces{\nu_z}^\bot:\,=\set{\xi\in\R^n}{\prodin{\nu_z}{\xi}=0},\\
                &\braces{\nu_x,\nu_z}^\bot:\,=\braces{\nu_x}^\bot\cap\braces{\nu_z}^\bot.
\end{split}
\end{equation*}
Then $\dim\braces{\nu_x}^\bot=\dim\braces{\nu_z}^\bot=n-1$. If $\nu_x=\pm\nu_z$, then $\braces{\nu_x}^\bot=\braces{\nu_z}^\bot$, otherwise $\dim\braces{\nu_x,\nu_z}^\bot=n-2$. In both cases, we can find a $(n-2)$-dimensional vector space contained in $\braces{\nu_x,\nu_z}^\bot$. Then, let $\braces{r_3,r_4,\ldots,r_n}$ be a collection of $n-2$ unitary column vectors in $\R^n$ that form an orthonormal basis for such subspace. Let $\RR\in\R^{n\times(n-2)}$ be the matrix containing all the elements of the basis as column vectors, i.e.,
\begin{equation*}
                \RR:\,=\brackets{\ r_3 \  r_4 \  \cdots \ r_n\ }.
\end{equation*}
Note that, therefore, the vector space
\begin{equation*}
					\braces{\RR}^\bot:\,=\set{\xi\in\R^n}{\RR^\top\xi=0}
\end{equation*}
defines a ($2$-dimensional) plane containing the unitary vectors $\nu_x$ and $\nu_z$. In addition, for $\nu_x\in\braces{\RR}^\bot$, there exist a unique unitary vector $\varrho_x\in\braces{\RR}^\bot\cap\braces{\nu_x}^\bot$ such that
\begin{equation*}
				\P_{\nu_x}=\brackets{\ \nu_x \ \varrho_x \ \RR \ }\in O(n) \quad\quad \mbox{ and } \quad \quad \det\P_{\nu_x}=1.
\end{equation*}
Analogously, let $\varrho_z\in\braces{\RR}^\bot\cap\braces{\nu_z}^\bot$ the unique unitary vector such that
\begin{equation*}
				\P_{\nu_z}=\brackets{\ \nu_z \ \varrho_z \ \RR \ }\in O(n) \quad\quad \mbox{ and } \quad \quad \det\P_{\nu_z}=-1.
\end{equation*}
Then,
\begin{equation*}
				\P_{\nu_x}-\P_{\nu_z}=\brackets{\ \nu_x-\nu_z  \ \ \varrho_x-\varrho_z \ \ \mathbf{0}\ },
\end{equation*}
and, for any $\zeta\in\B^{\e_1}$, $\zeta_1=0$ and 
\begin{equation*}
\begin{split}
				\abs{\P_{\nu_x}\zeta-\P_{\nu_z}\zeta}
				~& = \abs{\zeta_2(\varrho_x-\varrho_z)} \leq \abs{\varrho_x-\varrho_z}.
\end{split}
\end{equation*}

Finally, we show that, for this particular choice of the vectors $\varrho_x$ and $\varrho_z$, it holds
\begin{equation*}
				\abs{\varrho_x-\varrho_z}=\abs{\nu_x+\nu_z}.
\end{equation*}

By the properties of the $n$-dimensional orthogonal group, the matrix $\P_{\nu_x}^\top\P_{\nu_z}$ is also in $O(n)$ with determinant $\det(\P_{\nu_x}^\top\P_{\nu_z})=-1$, and it takes the form
\begin{equation*}
                \P_{\nu_x}^\top\P_{\nu_z}
                =\brackets{\begin{array}{cc}
                \Q & \mathbf{0} \\
                \mathbf{0} & \I_{n-2}
                \end{array}},
\end{equation*}
where
\begin{equation*}
				\Q=\brackets{\begin{array}{cc}\prodin{\nu_x}{\nu_z} & \prodin{\nu_x}{\varrho_z} \\
                \prodin{\varrho_x}{\nu_z} & \prodin{\varrho_x}{\varrho_z}
                \end{array}}\in O(2)
\end{equation*}
has determinant $\det\Q=-1$, that is, $\Q$ is a reflection matrix in $\R^2$ and, thus, there exists $\sigma\in[0,2\pi)$ such that 
\begin{equation*}
                \Q=\brackets{\begin{array}{cc}\sin\sigma & \cos\sigma \\
                \cos\sigma & -\sin\sigma
                \end{array}}.
\end{equation*}
Then, in particular, $\prodin{\varrho_x}{\varrho_z}=-\prodin{\nu_x}{\nu_z}$ and
\begin{equation*}
				\abs{\varrho_x-\varrho_z}^2=2-2\prodin{\varrho_x}{\varrho_z}=2+2\prodin{\nu_x}{\nu_z}=\abs{\nu_x+\nu_z}^2. \qedhere
\end{equation*}
\end{proof}


\end{document}